\newtheorem{theorem}{Theorem}[section]
\newtheorem{corollary}[theorem]{Corollary}
\newtheorem{lemma}[theorem]{Lemma}
\newtheorem{proposition}[theorem]{Proposition}
\newtheorem{definition-proposition}[theorem]{Definition-Proposition}
\theoremstyle{definition}
\newtheorem{definition}[theorem]{Definition}
\newtheorem{remark}[theorem]{Remark}
\newtheorem{example}[theorem]{Example}
\newcommand{\End}{\operatorname{End}\nolimits}
\newcommand{\Rad}{\operatorname{Rad}\nolimits}
\newcommand{\V}{\mathcal{V}}
\newcommand{\X}{\mathcal{X}}
\newcommand{\Y}{\mathcal{Y}}
\newcommand{\E}{\mathbb{E}}
\newcommand{\MM}{\mathscr{M}}
\newcommand{\WW}{\mathscr{W}}
\DeclareMathOperator{\eHom}{\preccurlyeq}
\DeclareMathOperator{\noteHom}{\not \preccurlyeq}
\DeclareMathOperator{\eExt}{\curlyeqprec}
\newcommand{\wide}{\operatorname{wide}\nolimits}
\DeclareMathOperator{\Hom}{Hom}
\DeclareMathOperator{\Ext}{Ext}
\renewcommand{\mod}{\operatorname{mod}}
\DeclareMathOperator{\add}{add}
\begin{document}
\title[Wide subcategories for higher Auslander algebras]{Classification of higher wide subcategories for higher Auslander algebras of type $A$}

\author{Martin Herschend}
\address{Department of Mathematics, Uppsala University, Box 480, 751 06 Uppsala, Sweden}
\email{martin.herschend@math.uu.se}

\author{Peter J\o rgensen}
\address{Department of Mathematics, Aarhus University, 8000 Aarchs C, Denmark}
\email{peter.jorgensen@math.au.uk}
\urladdr{https://sites.google.com/view/peterjorgensen}

\thanks{The first author was supported by the Swedish Research Council grant number 621-2014-3983 and the second author by EPSRC grant EP/P016014/1 ``Higher Dimensional Homological Algebra''.}

\keywords{Higher Auslander algebra, $d$-abelian category, $d$-cluster tilting subcategory, higher homological algebra, wide subcategory}  

\begin{abstract}
A subcategory $\WW$ of an abelian category is called wide if it is closed under kernels, cokernels, and extensions.  Wide subcategories are of interest in representation theory because of their links to other homological and combinatorial objects, established among others by Ingalls--Thomas and Marks--\v{S}\v{t}ov\'{\i}\v{c}ek.

If $d \geqslant 1$ is an integer, then Jasso introduced the notion of $d$-abelian categories, where kernels, cokernels, and extensions have been replaced by longer complexes.  Wide subcategories can be generalised to this situation.

Important examples of $d$-abelian categories arise as the $d$-cluster tilting subcategories $\MM_{ n,d }$ of $\mod A_n^{ d-1 }$, where $A_n^{ d-1 }$ is a higher Auslander algebra of type $A$ in the sense of Iyama.  This paper gives a combinatorial description of the wide subcategories of $\MM_{ n,d }$ in terms of what we call non-interlacing collections.  
\end{abstract}

\maketitle

\tableofcontents

\section{Introduction}

There has recently been considerable interest in wide subcategories of abelian categories, which are full subcategories closed under kernels, cokernels, and extensions.  The theory has been developed to a high level, and deep links with several other areas of mathematics have been revealed.  In many cases, there are bijections from the set of wide subcategories to other classes of mathematical objects like cluster tilting objects, non-crossing partitions, support varieties, and torsion classes.  Algebraic and representation theoretical aspects of the theory have been investigated in \cite{B}, \cite{F}, \cite{HJV}, \cite{MMSS}, \cite{MS}, \cite{R}, \cite{Y}, geometrical aspects in \cite{BIK}, \cite{H}, and combinatorial aspects in \cite{Br}, \cite{GG}, \cite{GS}, \cite{HK}, \cite{IPT}, \cite{IT}.  

For example, it was shown in \cite[thm.\ 1.1]{IT} that in the category of finite dimensional representations of a finite, acyclic quiver $Q$, there is a bijection between wide subcategories and torsion classes.  There is also a bijection to basic cluster tilting objects in the cluster category associated with $Q$, and if $Q$ is of extended Dynkin type, then there is a bijection to the noncrossing partitions associated with $Q$.  The bijection between wide subcategories and torsion classes was extended to a large class of module categories over finite dimensional algebras in \cite[cor.\ 3.11]{MS}. 

For an integer $d \geqslant 1$, the notion of $d$-abelian categories was introduced in \cite{J} as an analogue of abelian categories from the point of view of higher dimensional Auslander-Reiten theory. In a $d$-abelian category, kernels, cokernels and extensions are replaced by longer complexes, see \cite[def.\ 3.1]{J}.  The notion of wide subcategories was generalised to $d$-abelian categories in \cite[def.\ 2.11]{HJV}. A general theory of such wide subcategories was developed in \cite{HJV}, and some simple examples were worked out in \cite[sec.\ 7]{HJV}.  

The prototypical example of a $d$-abelian category is a $d$-cluster tilting subcategory of the module category of a finite dimensional algebra. Algebras of global dimension $d$ admitting $d$-cluster tilting subcategories with finitely many indecomposables are called $d$-representation finite \cite{IO} and have been extensively studied as a source of $d$-abelian categories. One of the features of a $d$-representation finite algebra is that its module category has a unique $d$-cluster tilting subcategory.

An algebra is $1$-representation finite if and only if it is representation finite and hereditary. So if $Q$ is a Dynkin quiver, then the path algebra of $Q$ is $1$-representation finite. For $d>1$, the first known $d$-representation finite algebras are the higher Auslander algebras of type $A$ introduced by Iyama in \cite{I2}. They are constructed recursively starting from path algebras of Dynkin type $A$ with linear orientation. Then for each $d$, one takes the algebras obtained for $d-1$ and passes to their corresponding higher Auslander algebras. The structure of the corresponding $d$-abelian categories was described in detail in \cite{OT}. Recently, higher Auslander algebras of type $A$ have been used in \cite{DJL} as a tool to study partially wrapped Fukaya categories of symmetric products of the unit disc with finitely many stops.

In this paper we describe combinatorially all wide subcategories of the $d$-abelian categories which arise from higher Auslander algebras of type $A$.  Our main tools are the methods of \cite{HJV} and \cite{OT}.  The main result is Theorem \ref{main}, and the description is in terms of what we call non-interlacing collections.  It has as a special case the bijection in \cite[thm.\ 1.1]{IT} between classic wide subcategories and non-crossing partitions in Dynkin type $A$.

The $d$-cluster tilting subcategory $\MM$ of the module category of a $d$-representation finite algebra also gives rise to a $d$-cluster tilting subcategory $\mathscr{C}$ of its bounded derived category. The category $\mathscr{C}$ has the structure of a $(d+2)$-angulated category in the sense of \cite{GKO}, and there is a strong interplay between the $d$-abelian structure of $\MM$ and the $(d+2)$-angulated structure of $\mathscr{C}$. In certain situations this can even be used to study $(d+2)$-angulated categories in a more general setting (see for instance \cite{JJ}).

In \cite{B} it is shown that for a hereditary algebra $A$ there is a bijection between classic wide subcategories of the module category of $A$ and wide subcategories of its bounded derived category. This was generalised in \cite{F} to the setting described above, i.e., there is a bijection between the wide subcategories of $\MM$ and the wide subcategories of $\mathscr{C}$. In particular, this can be applied to higher Auslander algebras of type $A$. In other words, combining our results with \cite{F} we obtain a classification of wide subcategories of the $(d+2)$-angulated categories associated to higher Auslander algebras of type $A$.

\section{Preliminaries}

\subsection{Conventions}
Let $K$ be a field. All categories and functors are assumed to be $K$-linear. In all contexts we denote the $K$-dual $\Hom_K(-,K)$ by $D$.

Let $\MM$ be a category and $M, N \in \MM$. A diagram $M \leftrightarrow \cdots \leftrightarrow N$, where $\leftrightarrow$ represents a non-zero morphism in either direction is called a walk from $M$ to $N$. We call a category connected if any two non-zero objects are connected by a walk.

Let $\MM$ be an additive category. By an additive subcategory $\WW \subseteq \MM$ we mean a full subcategory closed under direct sums and summands. For a collection of objects $\mathcal{C}$ in $\MM$ we denote by $\add \mathcal{C}$ the smallest additive subcategory of $\MM$ containing $\mathcal{C}$.

We call $\MM$ Krull-Schmidt if each object decomposes into a direct sum of finitely many indecomposable objects and each indecomposable object has local endomorphism algebra. For such a category we denote by $\Rad_\MM$ its Jacobson radical, i.e.,
\[
\Rad_\MM(M,N) = \{f \in \MM(M,N) \mid 1_M - g \circ f \mbox{ is invertible for all } g \in \MM(N,M)\}.
\]
The square of the Jacobson radical is
\[
\Rad^2_\MM(M,N) = \{f\circ g \mid f \in \Rad_\MM(U,N),\, g \in \Rad_\MM(M,U) \mbox{ for some } U \in \MM\}.
\]
Next assume that $\MM$ is skeletally small, Hom-finite and $\MM(M,M)/\Rad_\MM(M,M) = K$ for all indecomposable $M \in \MM$. We then define the quiver $Q$ of $\MM$ as follows. As vertices $Q_0$ choose a set of representatives of the isomorphism classes of indecomposable objects in $\MM$. For $M,N \in Q_0$ we choose as arrows form $M$ to $N$ a subset of $\Rad_\MM(M,N)$, that gives a $K$-basis of $\Rad_\MM(M,N)/\Rad^2_\MM(M,N)$. 

By an algebra we mean a finite dimensional $K$-algebra. For an algebra $A$ we denote the category of finitely generated right $A$-modules by $\mod A$. By $A$-module we always mean an object in $\mod A$. We use the terminology of quivers, quiver representations, and path algebras following the conventions in \cite{ASS}. 

Let $\WW$ be an additive subcategory of $\mod A$. By a $\WW$-resolution of $M \in \mod A$ we mean an exact sequence
\[
0 \to W_m \to \cdots \to W_0 \to M \to 0,
\]
where $m \geqslant 0$ and $W_i \in \WW$.

\subsection{$d$-abelian categories and wide subcategories}
In this section we introduce some basic results for $d$-abelian categories. We mainly rely on \cite{J}, in which $d$-abelian categories were first introduced. We then recall the notion of a wide subcategory of a $d$-abelian category following \cite{HJV}.

\begin{definition}\label{dAbel}
Let $\MM$ be an additive category and
\[
\E: \quad M_{d+1} \overset f \to M_{d} \to \cdots \to M_{1} \overset g\to M_{0}
\]
a sequence in $\MM$. 
\begin{enumerate}
\item We call 
\[
M_{d+1} \overset f \to M_{d} \to \cdots \to M_{1}
\]
a \emph{$d$-kernel} of $g$ if
\[
0 \to \MM(M,M_{d+1}) \overset {f\circ -} \to \MM(M,M_{d}) \to \cdots \to \MM(M,M_{0})
\]
is exact for all $M \in \MM$.
\item We call 
\[
M_{d} \to \cdots \to M_{1} \overset g\to M_{0}
\]
a \emph{$d$-cokernel} of $f$ if
\[
0 \to \MM(M_{0},M) \overset {- \circ g} \to \MM(M_{1},M) \to \cdots \to \MM(M_{d+1},M)
\]
is exact for all $M \in \MM$.
\item If both (1) and (2) are satisfied we call $\E$ a \emph{$d$-exact} sequence (or a \emph{$d$-extension} of $M_{0}$ by $M_{d+1}$).
\item We say that $\MM$ is \emph{$d$-abelian} if it is idempotent split, every morphism admits a $d$-kernel and $d$-cokernel, and every monomorphism $f$ respectively epimorphism $g$ fits into a $d$-exact sequence of the form $\E$.
\end{enumerate}
\end{definition}

There is a natural equivalence relation on $d$-extensions introduced in \cite{J}, which we now recall.

\begin{definition}\label{def.extensions}
Let $\MM$ be a $d$-abelian category. We call two $d$-extensions
\[
\E: \quad X \to E_{d} \to \cdots \to E_{1} \to Y
\]
and
\[
\E': \quad X \to E'_{d} \to \cdots \to E'_{1} \to Y
\]
equivalent if there is a commutative diagram
\[
\xymatrix{
X\ar@{=}[d] \ar[r] & E_{d} \ar[r] \ar[d] &  \cdots \ar[r] &  E_{1} \ar[r]\ar[d] &  Y \ar@{=}[d] \\
X \ar[r] & E'_{d} \ar[r] &  \cdots \ar[r] &  E'_{1} \ar[r] &  Y
}
\]
It follows from \cite[Proposition 4.10]{J} that this indeed does define an equivalence relation. 
\end{definition}

Next we introduce the notion of wide subcategories.
\begin{definition}\label{def.wide}
Let $\MM$ be a $d$-abelian category. We call an additive subcategory $\WW \subseteq \MM$ \emph{wide} if the following conditions hold.
\begin{enumerate}
\item Every morphism $f: M \to N$ in $\WW$ admits a $d$-kernel and $d$-cokernel in $\MM$ with terms in $\WW$.
\item Every $d$-extension
\[
\E: \quad X \to E_{d} \to \cdots \to E_{1} \to Y
\]
in $\MM$ with $X,Y \in \WW$ is equivalent to a $d$-extension
\[
\E: \quad X \to E'_{d} \to \cdots \to E'_{1} \to Y
\]
with $E'_{i} \in \WW$ for all $1 \leqslant i \leqslant d$.
\end{enumerate}
\end{definition}
For a class of objects $\mathcal{C}$ in $\MM$ we denote the smallest wide subcategory of $\MM$ containing $\mathcal{C}$ by $\wide\mathcal{C}$.

A difference between abelian and $d$-abelian categories is that $d$-kernels and $d$-cokernels are not unique up to isomorphism. Similarly, equivalent $d$-extensions may be non-isomorphic. However, it is shown in \cite{J} that uniqueness holds if we replace isomorphism of complexes by homotopy equivalence. 

Next we consider the case when $\MM$ is a Krull-Schmidt category. We start by showing that representatives of $d$-kernels, $d$-cokernels and $d$-extensions can be chosen in a certain minimal way that is unique.

\begin{proposition}\label{radical}
Let $\MM$ be a $d$-abelian Krull-Schmidt category and $X,Y \in \MM$. 
\begin{enumerate}[\rm(1)]
\item Let $f \in \MM(X,Y)$. Then there is a $d$-kernel
\[
K_{d} \overset{k_d}\to \cdots \overset{k_2} \to K_{1} \to X
\]
of $f$ such that $k_i \in \Rad_\MM(K_i, K_{i-1})$ for all $2 \leqslant i \leqslant d$. Moreover, this $d$-kernel appears as a direct summand (in the category of $\MM$-complexes) of any other $d$-kernel of $f$. 
\item Let $f \in \MM(X,Y)$. Then there is a $d$-cokernel
\[
Y \to C_{d} \overset{c_d}\to \cdots \overset{c_2} \to C_{1}
\]
of $f$ such that $c_i \in \Rad_\MM(C_i, C_{i-1})$ for all $2 \leqslant i \leqslant d$. Moreover, this $d$-cokernel appears as a direct summand (in the category of $\MM$-complexes) of any other $d$-cokernel of $f$. 
\item In every equivalence class of $d$-extensions of $Y$ by $X$ there is a representative
\[
\E: \quad X \to E_{d} \overset{e_{d}}\to \cdots \overset{e_2}\to E_{1} \to Y
\]
such that $e_i \in \Rad_\MM(E_i, E_{i-1})$ for all $2 \leqslant i \leqslant d$. Moreover, $\E$ is a direct summand (in the category of $\MM$-complexes) of every other equivalent $d$-extension.
\end{enumerate}
\end{proposition}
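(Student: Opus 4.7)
The plan is to combine an ``extraction'' procedure, which in any Krull--Schmidt setting iteratively removes contractible direct summands from a complex, with the uniqueness up to chain-homotopy equivalence of $d$-kernels, $d$-cokernels, and representatives of equivalence classes of $d$-extensions established in \cite{J}. I sketch the argument for (1); parts (2) and (3) follow by dual and analogous reasoning, where (3) additionally uses that two representatives of an equivalence class of $d$-extensions are chain-homotopy equivalent via maps fixing \emph{both} endpoints $X$ and $Y$, which can be extracted from \cite[Proposition 4.10]{J}.

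For existence in (1), I would start with any $d$-kernel $K_d \overset{k_d}\to \cdots \overset{k_2}\to K_1 \overset{k_1}\to X$ of $f$. If some interior $k_i$ does not lie in $\Rad_\MM$, then by Krull--Schmidt one can decompose $K_i = K_i' \oplus L$ and $K_{i-1} = K_{i-1}' \oplus L$ so that $k_i$ restricts to $1_L$. This writes $K_\bullet$ as a direct sum of a shorter complex $K'_\bullet$ and a contractible complex $L \overset{1_L}\to L$ in degrees $i-1, i$. Since $\MM(M,-)$ is additive and the Hom-image of a contractible complex is exact, $K'_\bullet \to X$ is still a $d$-kernel of $f$. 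Iterating terminates (the total number of indecomposable summands strictly decreases) at a $d$-kernel with all interior differentials in $\Rad_\MM$.

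For the direct-summand claim, let $K_\bullet$ be the minimal $d$-kernel just produced and $K'_\bullet$ any other $d$-kernel; the extraction gives $K'_\bullet \cong K''_\bullet \oplus C_\bullet$ with $K''_\bullet$ minimal, so it suffices to show $K_\bullet \cong K''_\bullet$. By \cite{J} there are chain maps $\varphi, \psi$ over $X$ with $\psi\varphi - 1 = dh + hd$ for some homotopy $h = (h_0, \ldots, h_{d-1})$ satisfying $k_1 h_0 = 0$ (since $\varphi_0 = \psi_0 = 1_X$ forces the homotopy equation at position $0$ to read $k_1 h_0 = 0$). For $i \geqslant 2$, the component $(\psi\varphi)_i - 1_{K_i}$ is a sum of compositions involving the interior radical differentials, hence lies in $\Rad_\MM$. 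At position $1$ one has $(\psi\varphi)_1 - 1_{K_1} = k_2 h_1 + h_0 k_1$; the first summand is in $\Rad_\MM$ because $k_2$ is, and the second is nilpotent --- since $(h_0 k_1)^2 = h_0(k_1 h_0)k_1 = 0$ --- hence also in $\Rad_\MM$. Thus $(\psi\varphi)_i \equiv 1_{K_i} \pmod{\Rad_\MM}$ for every $i$, so by semiperfectness of $\End(K_i)$ each $(\psi\varphi)_i$ is an automorphism. Symmetrically $\varphi\psi$ is an automorphism of complexes, so $\varphi$ is an isomorphism and $K_\bullet$ appears as a direct summand of $K'_\bullet$.

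The main obstacle I anticipate is precisely this last step --- promoting a chain-homotopy equivalence between minimal complexes to an honest isomorphism. The key trick is the nilpotency of $h_0 k_1$, which controls the ``boundary'' contribution at the fixed endpoint that does not factor through an interior radical differential. For part (3) an identical argument applies, with nilpotent boundary contributions at both $X$ and $Y$; a secondary point to verify there is that the equivalence relation of Definition \ref{def.extensions} indeed implies chain-homotopy equivalence over both endpoints, which follows by iterating the commutative ladder and invoking \cite[Proposition 4.10]{J}.
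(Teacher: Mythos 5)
Your overall strategy coincides with the paper's: strip identity summands to get a representative with radical interior differentials, then compare two such representatives via maps in both directions and show the composites are isomorphisms degreewise using a homotopy whose terms are controlled by the radical differentials. However, there is one genuinely false step in your argument: the claim that $h_0 k_1$ lies in $\Rad_\MM(K_1,K_1)$ \emph{because} it is nilpotent. Nilpotent endomorphisms of a decomposable object need not be radical: if $K_1 = U \oplus U$ with $U$ indecomposable and $\End(U)=K$, then $\End(K_1) \cong M_2(K)$ has zero Jacobson radical but plenty of square-zero elements. So ``$(\psi\varphi)_1 \equiv 1 \pmod{\Rad_\MM}$'' does not follow from what you wrote. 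The conclusion you need is recoverable, but by a different token: $1 + h_0k_1$ is invertible because $h_0k_1$ is square-zero, and adding the radical element $k_2h_1$ to an invertible element preserves invertibility (since $u + r = u(1 + u^{-1}r)$); the same repair works at both endpoints in part (3). As written, though, the step is wrong.

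It is worth noting how the paper sidesteps this entirely: instead of quoting a generic chain homotopy from \cite{J}, it constructs the homotopy by hand from the defining lifting property of $d$-kernels, starting at position $1$. Since the chain self-map $c = 1 - ba$ vanishes on $X$, one has $k_1 \circ c_1 = 0$, so exactness of $\MM(K_1,K_2) \to \MM(K_1,K_1) \to \MM(K_1,X)$ gives $c_1 = k_2 \circ h_1$ \emph{with no $h_0$ term at all}; inductively $c_i = k_{i+1}h_i + h_{i-1}k_i$ and every $c_i$ is honestly radical. In part (3) the only boundary term that survives is $e_{d+1}\circ h_d$, and the paper shows $h_d$ itself is radical because $h_d \circ e_{d+1} = 0$ forces $h_d$ to factor through the radical morphism $e_d$. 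You should either adopt this construction of the homotopy or replace your ``nilpotent hence radical'' sentence with the invertibility argument above. The remaining parts of your write-up (the Gaussian-elimination extraction of contractible summands, termination by counting indecomposables, and the reduction of the direct-summand claim to comparing two minimal representatives) match the paper and are fine.
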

\begin{proof}
(1) Let 
\[
K_{d} \overset{k_d}\to \cdots \overset{k_2} \to K_{1} \overset{k_1}\to X
\]
be a $d$-kernel of $f$ such that the number of indecomposable direct summands of $\bigoplus_{i=1}^d K_i$ is minimal. If $k_i$ is not a radical morphism for some $2 \leqslant i \leqslant d$, then we may write $K_i = U \oplus \tilde K_{i}$ and $K_{i-1} = U \oplus \tilde K_{i-1}$ such that $k_i = 1_U \oplus \tilde k_{i}$ for some $\tilde k_{i} : \tilde K_{i} \to \tilde K_{i-1}$ where $U$ is indecomposable. It follows that replacing $k_i : K_i \to K_{i-1}$ in the $d$-kernel by $\tilde k_{i} : \tilde K_{i} \to \tilde K_{i-1}$, gives a $d$-kernel with smaller total number of indecomposable direct summands in its terms, which contradicts minimality.

Next let 
\[
K'_{d} \overset{k'_d}\to \cdots \overset{k'_2} \to K'_{1} \overset{k'_1}\to X
\]
be another $d$-kernel of $f$. Using the defining property of $d$-kernels we obtain the commutative diagrams
\[
\xymatrix{
K_d\ar[d]^{a_d} \ar[r]^{k_d} & K_{d-1} \ar[r]^{k_{d-1}} \ar[d]^{a_{d-1}} &  \cdots \ar[r]^{k_2} &  K_{1} \ar[r]^{k_1}\ar[d]^{a_{1}} &  X \ar@{=}[d] \\
K'_d \ar[r]_{k'_d} & K'_{d-1} \ar[r]_{k'_{d-1}} &  \cdots \ar[r]_{k'_2} &  K'_{1} \ar[r]_{k'_1} &  X
}
\]
and
\[
\xymatrix{
K'_d\ar[d]^{b_d} \ar[r]^{k'_{d}} & K'_{d-1} \ar[r]^{k'_{d-1}} \ar[d]^{b_{d-1}} &  \cdots \ar[r]^{k'_{2}} &  K'_{1} \ar[r]^{k'_{1}}\ar[d]^{b_{1}} &  X \ar@{=}[d] \\
K_d \ar[r]_{k_d} & K_{d-1} \ar[r]_{k_{d-1}} &  \cdots \ar[r]_{k_2} &  K_{1} \ar[r]_{k_1} &  X.
}
\]
Now consider $c_i = 1_{K_i} - b_i\circ a_i \in \MM(K_i,K_i)$. Using the property of $d$-kernels we find morphisms $h_i$ in the diagram below
\[
\xymatrix{
K_d\ar[d]^{c_d} \ar[r]^{k_{d}} & K_{d-1} \ar[r]^{k_{d-1}} \ar[d]^{c_{d-1}} \ar[dl]^{h_{d-1}} &  \cdots \ar[dl]^{h_{d-2}} \ar[r]^{k_{2}} &  K_{1} \ar[r]^{k_{1}}\ar[d]^{c_{1}} \ar[dl]^{h_{1}} &  X \ar[d]^0 \\
K_d \ar[r]_{k_{d}} & K_{d-1} \ar[r]_{k_{d-1}} &  \cdots \ar[r]_{k_{2}} &  K_{1} \ar[r]_{k_{1}} &  X
}
\]
such that
\[
\begin{cases}
c_1 = k_2 \circ h_1,\\
c_i  = k_{i+1} \circ h_{i} + h_{i-1} \circ k_{i}, & 2 \leqslant i \leqslant d-1,\\
c_d = h_{d-1} \circ k_{d}.
 \end{cases}
\]
In particular, $c_i \in \Rad_\MM(K_i,K_i)$ and so $b_i \circ a_i = 1_{K_i} - c_i$ is an isomorphism. Hence we may write $1_{K_i} = ((b_i \circ a_i)^{-1} b_i) \circ a_i$. The claim follows as $\MM$ has split idempotents.

(2) This is dual to (1).

(3) Existence of $\E$ follows in the same way as in (1). Now let
\[
\E': \quad X \overset{e_{d+1}'}\to E'_{d} \overset{e_d'} \to \cdots \overset{e_2'} \to E'_{1} \overset{e_1'}\to Y
\]
be an equivalent $d$-extension. Then there is a commutative diagram
\[
\xymatrix{
X\ar@{=}[d] \ar[r]^{e_{d+1}} & E_{d} \ar[r]^{e_{d}} \ar[d]^{a_d} &  \cdots \ar[r]^{e_{2}} &  E_{1} \ar[r]^{e_{1}}\ar[d] _{a_1}&  Y \ar@{=}[d] \\
X \ar[r]_{e'_{d+1}} & E'_{d} \ar[r]_{e'_{d}} &  \cdots \ar[r]_{e'_{2}} &  E'_{1} \ar[r]_{e'_{1}} &  Y.
}
\]
By \cite[Proposition 4.10]{J}, there is also a commutative diagram
\[
\xymatrix{
X\ar@{=}[d] \ar[r]^{e'_{d+1}} & E'_{d} \ar[r]^{e'_{d}} \ar[d]^{b_d} &  \cdots \ar[r]^{e'_{2}} &  E'_{1} \ar[r]^{e'_{1}}\ar[d] _{b_1}&  Y \ar@{=}[d] \\
X \ar[r]_{e_{d+1}} & E_{d} \ar[r]_{e_{d}} &  \cdots \ar[r]_{e_{2}} &  E_{1} \ar[r]_{e_{1}} &  Y.
}
\]
As in (1) we proceed to show that $c_i = 1_{E_i} - b_i\circ a_i \in \Rad_\MM(E_i,E_i)$ by constructing a diagram
\[
\xymatrix{
X\ar[d]^0 \ar[r]^{e_{d+1}} & \;E_{d} \ar[r]^{e_{d}} \ar[dl]^{h_{d}} \ar[d]^{c_d} &  \cdots \ar[r]^{e_{2}} \ar[dl]^{h_{d-1}} &  E_{1} \ar[r]^{e_{1}} \ar[dl]^{h_{1}}\ar[d] _{c_1}&  Y \ar[d]^0 \\
X \ar[r]_{e_{d+1}} & \;E_{d} \ar[r]_{e_{d}} &  \cdots \ar[r]_{e_{2}} &  E_{1} \ar[r]_{e_{1}} &  Y
}
\]
such that
\[
\begin{cases}
c_1 = e_2 \circ h_1,\\
c_i  = e_{i+1} \circ h_{i} + h_{i-1} \circ e_{i}, & 2 \leqslant i \leqslant d,\\
0 = h_{d} \circ e_{d+1}.
 \end{cases}
\]
By the defining property of $d$-cokernels, the last condition implies that $h_d$ factors through $e_d$ and so is a radical morphism. Hence $c_i \in \Rad_\MM(E_i,E_i)$ for all $1 \leqslant i \leqslant d$ and the claim follows as in (1).
\end{proof}

\begin{definition}\label{minimal}
We call the $d$-kernels, $d$-cokernels and $d$-extensions appearing in Proposition~\ref{radical} \emph{minimal}. By the Krull-Schmidt property it follows immediately that they are unique up to isomorphism.
\end{definition}
A consequence of Proposition~\ref{radical} is the following characterisation of wide subcategories.

\begin{corollary}\label{cor.wide}
Let $\MM$ be a $d$-abelian Krull-Schmidt category and $\WW \subseteq \MM$ a full subcategory closed under direct sums and direct summands. Then $\WW$ is wide if and only if the following conditions hold.
\begin{enumerate}
\item For any morphism $f: M \to N$ in $\WW$, the terms of its minimal $d$-kernel and $d$-cokernel in $\MM$ lie in $\WW$.
\item For any $X,Y \in \WW$, every minimal $d$-extension of $Y$ by $X$ in $\MM$ has terms in $\WW$. 
\end{enumerate}
\end{corollary}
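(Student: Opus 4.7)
The plan is to derive both implications directly from Proposition~\ref{radical}, which tells us that minimal $d$-kernels, $d$-cokernels, and $d$-extensions appear as direct summands of arbitrary ones. Since $\WW$ is assumed closed under direct summands and direct summands of a complex are in particular taken term-wise, this transfer of membership in $\WW$ from any representative to the minimal one is what drives the argument.

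For the ``only if'' direction, suppose $\WW$ is wide and take $f\colon M \to N$ in $\WW$. By Definition~\ref{def.wide}(1) there exists a $d$-kernel of $f$ with all terms in $\WW$. By Proposition~\ref{radical}(1) the minimal $d$-kernel of $f$ is a direct summand of this one in the category of $\MM$-complexes, so each of its terms is a direct summand of a term in $\WW$ and hence lies in $\WW$. The argument for the $d$-cokernel is the dual, using Proposition~\ref{radical}(2). For condition~(2) of the corollary, let $X, Y \in \WW$ and let $\E$ be the minimal $d$-extension of $Y$ by $X$. By Definition~\ref{def.wide}(2), $\E$ is equivalent to some $d$-extension $\E'$ with terms in $\WW$, and Proposition~\ref{radical}(3) says $\E$ is a direct summand of $\E'$, so the terms of $\E$ lie in $\WW$.

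For the ``if'' direction, suppose conditions (1) and (2) of the corollary hold. Given a morphism $f\colon M \to N$ in $\WW$, condition~(1) says that its minimal $d$-kernel and minimal $d$-cokernel in $\MM$ have terms in $\WW$; these then serve as $d$-kernel and $d$-cokernel of $f$ in $\MM$ with terms in $\WW$, verifying Definition~\ref{def.wide}(1). Given a $d$-extension $\E$ of $Y$ by $X$ with $X,Y \in \WW$, Proposition~\ref{radical}(3) provides a minimal representative $\E''$ of its equivalence class, and condition~(2) ensures that the terms of $\E''$ lie in $\WW$. Thus $\E$ is equivalent to a $d$-extension with terms in $\WW$, verifying Definition~\ref{def.wide}(2).

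Both directions are essentially bookkeeping once Proposition~\ref{radical} is in hand, so there is no substantial obstacle; the only point that needs mild care is the translation ``direct summand in the category of $\MM$-complexes implies term-wise direct summand in $\MM$'', which is automatic and lets us invoke the closure of $\WW$ under direct summands on each term.
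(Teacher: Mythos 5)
Your proposal is correct and is precisely the argument the paper intends: the corollary is stated as an immediate consequence of Proposition~\ref{radical}, and your write-up simply makes explicit the two directions (minimal representatives are term-wise direct summands of arbitrary ones, hence inherit membership in $\WW$; conversely, minimal representatives serve as the witnesses required by Definition~\ref{def.wide}). No gaps.
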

The following result about $d$-kernels will be useful to compute wide subcategories.

\begin{proposition}\label{kernel}
Let $\MM$ be a $d$-abelian Krull-Schmidt category, $M \in \MM$ indecomposable and $K_d \rightarrow \cdots \rightarrow K_1 \overset{k}\rightarrow M$ be a $d$-kernel of a non-zero morphism $M \overset{f}\rightarrow N$. If $U \in \MM$ is indecomposable and there exists $g \in \MM(U,M) \setminus \Rad^2_\MM(U,M)$ such that $f \circ g = 0$, then $U$ appears as a summand in $K_1$.
\end{proposition}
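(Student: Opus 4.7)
The plan is to combine the defining Hom-exactness of the $d$-kernel with two standard radical-splitting properties in a Krull-Schmidt category. The overall strategy is first to factor $g$ through $k$, then to show that $k$ itself must be radical, and finally to conclude that the factorisation cannot be radical, forcing $U$ to appear as a summand of $K_1$.

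First I would apply the defining property of the $d$-kernel to the test object $U$. This yields exactness of
\[
\MM(U,K_1) \xrightarrow{k \circ -} \MM(U,M) \xrightarrow{f \circ -} \MM(U,N)
\]
at the middle term, so the hypothesis $f \circ g = 0$ produces some $h \in \MM(U,K_1)$ with $g = k \circ h$.

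Next I would show $k \in \Rad_\MM(K_1,M)$. Since $M$ is indecomposable and $\MM$ is Krull-Schmidt, $k \notin \Rad_\MM(K_1,M)$ would force some indecomposable summand of $K_1$ to map isomorphically onto $M$ under $k$; equivalently, $k$ would admit a section $s \in \MM(M,K_1)$ with $k \circ s = 1_M$. This would give $f = f \circ k \circ s = 0$, contradicting $f \neq 0$.

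Finally, suppose for contradiction that $h \in \Rad_\MM(U,K_1)$. Combined with $k \in \Rad_\MM(K_1,M)$, this yields $g = k \circ h \in \Rad_\MM^2(U,M)$, contradicting the hypothesis on $g$. Hence $h$ is not radical, and since $U$ is indecomposable, the symmetric Krull-Schmidt argument produces a retraction $r \in \MM(K_1,U)$ with $r \circ h = 1_U$. Thus $h$ is a split monomorphism and $U$ appears as a direct summand of $K_1$, as desired. The main obstacle is purely the two invocations of the radical-versus-splitting equivalence (one at the target $M$, one at the source $U$); both are routine in a Krull-Schmidt category with local endomorphism rings of indecomposables, so I would simply appeal to them rather than re-derive the formalism of the Jacobson radical.
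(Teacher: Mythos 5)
Your proposal is correct and follows essentially the same route as the paper: factor $g = k \circ h$ using the Hom-exactness defining the $d$-kernel, observe that $k$ is radical because $f \neq 0$ and $M$ is indecomposable, and conclude from $g \notin \Rad^2_\MM(U,M)$ that $h$ cannot be radical, which by the Krull-Schmidt property forces $U$ to split off $K_1$. The paper merely phrases this contrapositively (assuming $U$ is not a summand, so that $\MM(U,K_1)=\Rad_\MM(U,K_1)$ automatically), but the content is identical.
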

\begin{proof}
Assume that $U$ does not appear as a summand in $K_1$. Since $f \circ g = 0$ we may write $g = k \circ h$ for some $h \in \MM(U,K_1) = \Rad_\MM(U,K_1)$. Since $f$ is non-zero and $M$ is indecomposable, $k \in \Rad_\MM(K_1,M)$ and so $g \in \Rad^2_\MM(U,M)$, which is a contradiction.
\end{proof}

\subsection{Wide subcategories of $d$-cluster tilting subcategories}

Let $A$ be a finite dimensional algebra and $\mod A$ the category of finitely generated right $A$-modules. We recall the the definition of $d$-cluster tilting from \cite{I2} (see also \cite[Section 2]{HJV}).
\begin{definition}\label{dCT}
Let $\MM$ be a functorially finite subcategory of $\mod A$. We say that $\MM$ is $d$-cluster tilting if
\[\begin{split}
\MM &= \{X \in \mod A \mid \Ext^i_A(X,M) = 0 \mbox{ for all } M \in \MM, 1 \leqslant i \leqslant d-1\} \\
&= \{X \in \mod A \mid \Ext^i_A(M,X) = 0 \mbox{ for all } M \in \MM, 1 \leqslant i \leqslant d-1\}.
\end{split}\]
\end{definition}
As mentioned in the introduction it is shown in \cite{J} that any $d$-cluster tilting subcategory $\MM$ is $d$-abelian. Moreover, the following characterisation of $d$-kernels, $d$-cokernels and $d$-exact sequences in $\MM$ holds.

\begin{proposition}\label{dCTexact}
Let $\MM \subseteq \mod A$ be $d$-cluster tilting. Then $\MM$ is $d$-abelian.
Let $$M_{d+1} \overset f \to M_{d} \to \cdots \to M_{1} \overset g\to M_{0}$$ be a sequence in $\MM$.
\begin{enumerate}
\item The sequence $M_{d+1} \to M_{d} \to \cdots \to M_{1}$ is a $d$-kernel of $g$ if and only if
\[
0 \to M_{d+1} \to M_{d} \to \cdots \to M_{1} \to M_{0}
\]
is exact in $\mod A$.
\item 
The sequence $M_{d} \to \cdots \to M_{1} \to M_{0}$ is a $d$-cokernel of $f$ if and only if
\[
M_{d+1} \to M_{d} \to \cdots \to M_{1} \to M_{0} \to 0
\]
 it is exact in $\mod A$.
\item The sequence $M_{d+1} \to M_{d} \to \cdots \to M_{1} \to M_{0}$ is $d$-exact if and only if
\[
0 \to M_{d+1} \to M_{d} \to \cdots \to M_{1} \to M_{0} \to 0
\]
is exact in $\mod A$.
\item Let $X,Y \in \MM$. There is a bijection from the set of equivalence classes of $d$-extensions of $Y$ by $X$ in $\MM$ to $\Ext_A^d(Y,X)$ that sends the equivalence class of a $d$-extension $$0 \to X \to E_{d} \to \cdots \to E_{1} \to Y \to 0$$ to its Yoneda-class
\[
[0 \to X \to E_{d} \to \cdots \to E_{1} \to Y \to 0].
\]
\end{enumerate}
\end{proposition}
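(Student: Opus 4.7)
The plan is to handle (1)--(3) via standard dimension shifts using the defining $\Ext$-vanishing of a $d$-cluster tilting subcategory, and then to deduce (4) by combining (3) with the Yoneda interpretation of $\Ext^d$.

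For (1), the direction from exactness in $\mod A$ to the $d$-kernel property is the more substantial one. I would split the exact sequence $0 \to M_{d+1} \to M_d \to \cdots \to M_0$ at its images $Z_i = \Image(M_i \to M_{i-1})$ into short exact sequences $0 \to Z_{i+1} \to M_i \to Z_i \to 0$, with $Z_{d+1} = M_{d+1}$. For any $M \in \MM$, applying $\Hom_A(M,-)$ and using $\Ext^j_A(M, M_i) = 0$ for $1 \leqslant j \leqslant d-1$ (which is precisely the $d$-cluster tilting hypothesis), repeated dimension shifts yield $\Ext^1_A(M, Z_{i+2}) \cong \Ext^{d-i}_A(M, M_{d+1}) = 0$ in the relevant range. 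This vanishing suffices to splice the resulting long exact sequences of $\Ext$ into exactness of $0 \to \MM(M, M_{d+1}) \to \cdots \to \MM(M, M_0)$. The converse direction is immediate: since $A$ is projective it lies in $\MM$, and setting $M = A$ in the $d$-kernel property returns the original sequence as exact in $\mod A$.

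Part (2) is proved dually: the same dimension-shift argument, applied to the second argument of $\Ext$ and using the other $\Ext$-vanishing condition in Definition \ref{dCT}, yields the converse direction, while the forward direction uses that $DA \in \MM$ (since $DA$ is injective) together with the fact that $\Hom_A(-, DA) = D$ is an exact duality on $\mod A$. Part (3) is then immediate, as $d$-exactness is by definition the conjunction of the $d$-kernel and $d$-cokernel properties, and these two properties correspond, by (1) and (2), to the left and right halves of exactness in $\mod A$ after prepending and appending a zero.

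For (4), part (3) reduces us to counting equivalence classes of exact sequences $0 \to X \to E_d \to \cdots \to E_1 \to Y \to 0$ in $\mod A$ with all $E_i \in \MM$, under the equivalence relation of Definition \ref{def.extensions}. The natural candidate map to $\Ext^d_A(Y, X)$ sends such a sequence to its Yoneda class, and it is well defined because any chain map as in Definition \ref{def.extensions} produces Yoneda-equivalent sequences by standard homological algebra. The main obstacle will be surjectivity: given a Yoneda representative $0 \to X \to F_d \to \cdots \to F_1 \to Y \to 0$ with arbitrary $F_i \in \mod A$, one must produce an equivalent sequence with all terms in $\MM$. I would do this by iteratively replacing the $F_i$ with right $\MM$-approximations, using functorial finiteness of $\MM$ together with the $\Ext$-vanishing to preserve exactness and the Yoneda class at each stage. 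For injectivity, the classical Yoneda equivalence is generated by chain maps, and one must show that any zig-zag of chain maps between $d$-extensions with terms in $\MM$ can be collapsed to a single chain map; this is where Proposition 4.10 of \cite{J}, which supplies the symmetry of the relation in Definition \ref{def.extensions}, enters.
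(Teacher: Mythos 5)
Your treatment of (1)--(3) is correct and is essentially the argument the paper has in mind: the paper dismisses these parts as well known and easily shown from the $\Ext$-vanishing in Definition \ref{dCT}, and your dimension shifts along the images $Z_i$, combined with evaluating the $d$-kernel (resp.\ $d$-cokernel) property at $A\in\MM$ (resp.\ $DA\in\MM$) for the converse directions, are precisely the standard way to carry this out. Note only that the shift gives an injection $\Ext^1_A(M,Z_{i+2})\hookrightarrow\Ext^{d-i}_A(M,M_{d+1})$ rather than an isomorphism, which is all you need. One omission: you never address the opening claim that $\MM$ is $d$-abelian. The paper disposes of this by citing \cite[Theorem 3.16]{J}, and your proof should do likewise, since (1)--(3) only characterise $d$-kernels and $d$-cokernels of sequences that are already given; they do not by themselves produce a $d$-kernel or $d$-cokernel for an arbitrary morphism of $\MM$.

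The genuine gap is in (4), which the paper does not prove but obtains by citing \cite[Proposition A.1]{I1}. Your injectivity and well-definedness remarks are fine, but the surjectivity step as you describe it does not work: given an exact sequence $0\to X\to F_d\to\cdots\to F_1\to Y\to 0$ with arbitrary $F_i\in\mod A$, ``iteratively replacing the $F_i$ with right $\MM$-approximations'' does not produce an exact sequence. A right $\MM$-approximation $M_i\to F_i$ is in general neither injective nor surjective, so the approximated complex need not be exact, and there is no reason its Yoneda class should agree with the original one. The actual construction approximates the successive syzygies rather than the terms: starting from a representative built from a projective resolution of $Y$ (whose terms already lie in $\MM$), one takes right $\MM$-approximations of the intermediate kernels, and the $\Ext$-vanishing of Definition \ref{dCT} is what guarantees both that the relevant kernels land in $\MM$ after $d$ steps and that the Yoneda class is preserved at each stage. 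This is precisely the content of Iyama's Proposition A.1; if you want (4) to be self-contained you must carry out that construction, and otherwise you should cite it as the paper does.
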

\begin{proof}
The fact that $\MM$ is $d$-abelian is shown in \cite[Theorem 3.16]{J}.

The statements (1), (2), (3) are well-known and easily shown using the fact that $\Ext^i_A(M,M') = 0$ for all $M,M' \in \MM$ and $1 \leqslant i \leqslant d$.

Statement (4) follows from \cite[Proposition A.1]{I1}.
\end{proof}
It will be convenient to construct wide subcategories from several smaller wide subcategories. For this purpose the following result is useful.

\begin{proposition}\label{directsum}
Let $\MM \subseteq \mod A$ be $d$-cluster tilting and $\WW_1, \WW_2 \subseteq \MM$ wide subcategories. If
\[
\Hom_A(X_1,X_2) = \Hom_A(X_2,X_1) = \Ext_A^d(X_1,X_2) =  \Ext_A^d(X_2,X_1) = 0
\]
for all $X_1 \in \WW_1$, $X_2\in \WW_2$. Then $\add\{\WW_1, \WW_2\} \subseteq \MM$ is wide.
\end{proposition}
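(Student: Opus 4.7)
The plan is to verify the two conditions of Corollary~\ref{cor.wide} for $\WW = \add\{\WW_1,\WW_2\}$. The key preliminary observation is that the $\Hom$-vanishing hypothesis forces every object $W \in \WW$ to admit a decomposition $W = W^{(1)} \oplus W^{(2)}$ with $W^{(i)} \in \WW_i$ (by Krull-Schmidt, since every indecomposable of $\WW$ lies in $\WW_1 \cup \WW_2$), and moreover forces every morphism $f \colon M \to N$ in $\WW$ to decompose as $f = f_1 \oplus f_2$ along the chosen decompositions $M = M^{(1)} \oplus M^{(2)}$, $N = N^{(1)} \oplus N^{(2)}$. Indeed, in the matrix representation of $f$ the off-diagonal entries lie in $\Hom_A(\WW_1,\WW_2) = 0$ or $\Hom_A(\WW_2,\WW_1) = 0$.

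For the $d$-kernel and $d$-cokernel condition, apply widness of each $\WW_i$ to $f_i$ and form the termwise direct sum of the resulting minimal $d$-kernels, resp.\ $d$-cokernels. This yields a $d$-kernel, resp.\ $d$-cokernel, of $f$ in $\MM$ with all terms in $\WW$. By Proposition~\ref{radical}(1), resp.\ (2), the minimal $d$-kernel, resp.\ $d$-cokernel, of $f$ in $\MM$ is a direct summand of this complex, and since $\WW$ is closed under direct summands, its terms lie in $\WW$.

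For the $d$-extension condition, take $X,Y \in \WW$ with decompositions $X = X^{(1)} \oplus X^{(2)}$, $Y = Y^{(1)} \oplus Y^{(2)}$. Proposition~\ref{dCTexact}(4) identifies equivalence classes of $d$-extensions of $Y$ by $X$ in $\MM$ with $\Ext_A^d(Y,X)$. The $\Ext^d$-vanishing hypothesis gives
\[
\Ext_A^d(Y,X) \;=\; \Ext_A^d(Y^{(1)},X^{(1)}) \oplus \Ext_A^d(Y^{(2)},X^{(2)}),
\]
so any class $[\E]$ corresponds to a pair $([\E_1],[\E_2])$ with $[\E_i] \in \Ext_A^d(Y^{(i)},X^{(i)})$. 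Using widness of $\WW_i$, I can represent $[\E_i]$ by a $d$-extension $\E_i$ of $Y^{(i)}$ by $X^{(i)}$ with terms in $\WW_i$. The termwise direct sum $\E_1 \oplus \E_2$ is a $d$-extension of $Y$ by $X$ with terms in $\WW$, and its Yoneda class equals $[\E]$ by bi-additivity of $\Ext$. Thus $\E$ is equivalent to $\E_1 \oplus \E_2$, and by Proposition~\ref{radical}(3) the minimal representative of $[\E]$ is a direct summand of $\E_1 \oplus \E_2$, so its terms lie in $\WW$.

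The main obstacle is really the bookkeeping in the extension step: one must verify that the Yoneda class of $\E_1 \oplus \E_2$ matches the pair $([\E_1],[\E_2])$ under the decomposition of $\Ext_A^d(Y,X)$ induced by the $\Ext^d$-vanishing. This reduces to naturality of $\Ext$ in both variables with respect to the inclusions $X^{(i)} \hookrightarrow X$ and projections $Y \twoheadrightarrow Y^{(i)}$ that effect the decomposition.
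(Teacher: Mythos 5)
Your proposal is correct and follows essentially the same route as the paper: decompose objects and morphisms along $\WW_1 \oplus \WW_2$ using the $\Hom$-vanishing, take termwise direct sums of $d$-kernels and $d$-cokernels, and use the $\Ext^d$-vanishing together with Proposition~\ref{dCTexact}(4) to split every $d$-extension as $\E_1 \oplus \E_2$. The only difference is cosmetic: you route the conclusion through Corollary~\ref{cor.wide} and the minimality statement of Proposition~\ref{radical}, whereas the paper appeals directly to Definition~\ref{def.wide}, which only requires the existence of some $d$-kernel, $d$-cokernel, or equivalent $d$-extension with terms in the subcategory.
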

\begin{proof}
Let $X,Y\in \add\{\WW_1, \WW_2\}$. Then we may write $X = X_1 \oplus X_2$ and $Y = Y_1 \oplus Y_2$ for some $X_1,Y_1\in \WW_1$ and $X_2,Y_2\in \WW_2$

Let $f : X \to Y$ be a morphism. By assumption we may write $f = f_1 \oplus f_2$ for some $f_1 : X_1 \to Y_1$ and $f_2 : X_2 \to Y_2$. Since $\WW_1$ and $\WW_2$ are wide there are $d$-kernels respectively $d$-cokernels of $f_1$ and $f_2$ with terms in $\add\{\WW_1, \WW_2\}$. Taking their direct sums gives a $d$-kernel respectively $d$-cokernel of $f$.

Next consider the natural map
\[
\Ext_A^d(Y_1,X_1) \oplus \Ext_A^d(Y_2,X_2) \to \Ext_A^d(Y,X)
\]
defined by the biadditivity of $\Ext_A^d$. It maps $([\E_1],[\E_2])$ to $[\E_1 \oplus \E_2]$ and is bijective since $\Ext_A^d(Y_1,X_2) =  \Ext_A^d(Y_2,X_1) = 0$. Hence, by Proposition~\ref{dCTexact}, any $d$-extension of $Y$ by $X$ is equivalent to $\E_1 \oplus \E_2$ for some $d$-extensions $\E_1$ in $\WW_1$ and $\E_2$ in $\WW_2$.
\end{proof}

To compute wide subcategories of $d$-cluster tilting subcategories, we will apply the following result.

\begin{theorem}\label{theoremB}\cite[Theorem B]{HJV}
Let $A$ be a finite dimensional algebra and $\MM$ a $d$-cluster tilting subcategory of $\mod A$. Let $\WW \subseteq \MM$ be an additive subcategory. Let $P \in \WW$ be a module and set $B = \End_A(P)$, so that $P$ becomes a $B$-$A$-bimodule. Assume the following: 
\begin{enumerate}[\rm(1)]
\item As an $A$-module $P$ has finite projective dimension. 
\item $\Ext^i_A(P,P) =0$ for all $i \geqslant 1$.
\item Each $W \in \WW$ admits an $\add{P}$-resolution
\[
0 \to P_m \to \cdots \to P_0 \to W \to 0, \quad P_i \in \add{P}.
\]
\item $\Hom_A(P, \WW) \subseteq \mod B$ is $d$-cluster tilting. 
\end{enumerate}
Then $\WW$ is a wide subcategory of $\MM$ and there is an equivalence of categories
\[
- \otimes_{B} P : \Hom_A(P, \WW) \to \WW.
\]
\end{theorem}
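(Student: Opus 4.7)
The plan is to first upgrade $\Hom_A(P,-)$ and $-\otimes_B P$ to quasi-inverse equivalences between $\WW$ and $\Hom_A(P,\WW)$, and then use the $d$-cluster tilting structure on $\Hom_A(P,\WW)$ to verify Definition \ref{def.wide} for $\WW$. For the equivalence, I would first show that $\Ext_A^i(P,W) = 0$ for every $W \in \WW$ and $i \geq 1$ by dimension shifting along the short exact sequences extracted from the $\add P$-resolution supplied by (3), using (2) as the base case. Applying $\Hom_A(P,-)$ to this resolution then yields an exact sequence $0 \to \Hom_A(P,P_m) \to \cdots \to \Hom_A(P,P_0) \to \Hom_A(P,W) \to 0$ in $\mod B$ with terms in $\add B$; since $\Hom_A(P,P_j) \otimes_B P \cong P_j$ naturally for $P_j \in \add P$, applying $-\otimes_B P$ to this projective resolution recovers $P_\bullet$, whose only nonzero homology is $W$ in degree $0$. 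This simultaneously yields $\Hom_A(P,W) \otimes_B P \cong W$ and $\Tor_i^B(\Hom_A(P,W),P) = 0$ for $i \geq 1$. Full faithfulness of $\Hom_A(P,-)$ on $\WW$ follows by comparing the two left-exact sequences obtained from applying $\Hom_A(-,W')$ to the $\add P$-resolution of $W$ and $\Hom_B(-,\Hom_A(P,W'))$ to its image in $\mod B$, identified via the Yoneda-type isomorphism $\Hom_A(Q,W') \cong \Hom_B(\Hom_A(P,Q),\Hom_A(P,W'))$ valid for $Q \in \add P$.

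For the wide axioms, by (4) and \cite[Theorem 3.16]{J} the category $\Hom_A(P,\WW)$ is $d$-abelian. Given $f : M \to N$ in $\WW$, I would take a $d$-kernel $0 \to N_d \to \cdots \to N_1 \to \Hom_A(P,M) \to \Hom_A(P,N)$ of $\Hom_A(P,f)$, which is exact in $\mod B$ by Proposition \ref{dCTexact}, and write $N_j = \Hom_A(P,K_j)$ with $K_j \in \WW$. Breaking this sequence into short exact sequences and dimension shifting with the $\Tor$-vanishing from the previous step shows that $-\otimes_B P$ preserves its exactness, producing an exact sequence $0 \to K_d \to \cdots \to K_1 \to M \to N$ in $\mod A$, which by Proposition \ref{dCTexact} is a $d$-kernel of $f$ in $\MM$ with terms in $\WW$. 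The $d$-cokernel case is dual. For $d$-extensions, Proposition \ref{dCTexact}(4) applied on both sides, together with the induced isomorphism $\Ext_A^d(Y,X) \cong \Ext_B^d(\Hom_A(P,Y),\Hom_A(P,X))$ coming from the equivalence, shows that every $d$-extension of $Y \in \WW$ by $X \in \WW$ is equivalent to one obtained by applying $-\otimes_B P$ to a $d$-extension in $\Hom_A(P,\WW)$, whose middle terms automatically lie in $\WW$.

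The main obstacle I anticipate is the $\Tor$-vanishing propagation inside the wide axiom step: the functor $-\otimes_B P$ is only right exact in general, and one must verify that any exact sequence whose terms lie in $\Hom_A(P,\WW)$ stays exact after applying it. The proof must carry the dimension-shifting inductively through the intermediate images of the $d$-kernel sequence, checking at each step that no new $\Tor^B$ obstruction is accumulated; here the finite projective dimension of $P$ from (1), combined with the projective resolutions produced by (3), is what ultimately forces all relevant $\Tor_i^B$ to vanish.
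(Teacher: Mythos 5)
The paper does not actually prove this statement; it is imported verbatim as \cite[Theorem B]{HJV}, so there is no internal proof to compare against and your proposal must be judged on its own. Its first half is correct and standard: the dimension shift giving $\Ext^i_A(P,W)=0$ for all $i\geqslant 1$ and $W\in\WW$, the resulting $B$-projective resolution $\Hom_A(P,P_\bullet)$ of $\Hom_A(P,W)$, the conclusions $\Hom_A(P,W)\otimes_BP\cong W$ and $\Tor_i^B(\Hom_A(P,W),P)=0$ for $i\geqslant 1$, and the projectivization argument for full faithfulness all go through.

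The gap is in the transfer of $d$-kernels (and dually $d$-cokernels), exactly where you flag it, and the fix you propose is circular rather than merely laborious. Let $F=\Hom_A(P,-)$ and let $0\to FK_d\to\cdots\to FK_1\to FM\to FN$ be the exact $d$-kernel sequence in $\mod B$. Splitting it into short exact sequences with intermediate images $Z_j$ and $C=\Image(Ff)$, dimension shifting only propagates $\Tor$-vanishing towards higher homological degree: one finds $\Tor^B_i(Z_j,P)\cong\Tor^B_{i+j+1}(C,P)$ for $i\geqslant 1$, and these groups of $C$ are precisely where the obstructions to exactness of the tensored complex live. Nothing in the hypotheses kills $\Tor^B_{\geqslant 1}(C,P)$; there is no $\Tor$-acyclic module at the right-hand end of the sequence to anchor the induction, so ``checking that no new $\Tor^B$ obstruction is accumulated'' cannot be carried out from the stated data. (Contrast the $d$-extension case, where the sequence terminates in $FY$ with $Y\in\WW$, so the induction from the right does close up; that part of your sketch is sound, except that the isomorphism $\Ext^d_A(Y,X)\cong\Ext^d_B(FY,FX)$ does not ``come from the equivalence'' of additive categories --- it must be proved, e.g.\ by computing both sides from the $\Hom_A(-,X)$-acyclic resolution $P_\bullet\to Y$ using $\Ext^{\geqslant 1}_A(P,X)=0$.) Closing the $d$-kernel gap requires inputs your sketch never touches: $P$ lies in the $d$-cluster tilting subcategory $\MM\subseteq\mod A$, so $\Ext^i_A(P,M')=0$ for all $M'\in\MM$ and $1\leqslant i\leqslant d-1$, whence $F$ applied to the $d$-kernel of $f$ taken in $\MM$ is already exact in $\mod B$ (dimension shifting now anchored at the top term, which lies in $\MM$); one must then compare this sequence with the $d$-kernel taken in $\Hom_A(P,\WW)$, and hypothesis (1) --- which your argument also never genuinely uses --- enters there. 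As written, condition (1) of Definition \ref{def.wide} is not established.
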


\subsection{Higher Auslander algebras of type $A$}\label{Higher Auslander algebras}

Next we introduce a description of the higher Auslander algebras of type $A$ by quivers and relations. Our notation differs slightly from the one in \cite{OT}. 

We begin by introducing some combinatorial data that is needed to make our definitions.

\begin{definition}\label{combinatorics}
Throughout we fix two integers $n$ and $d$ with $n \geqslant 1$ and $d \geqslant 0$. 

\begin{enumerate}[\rm(1)]
\item Set $N_{n,d} :=\{1,\dots, n+d\}$.

\item Set $\V_{n,d} := \{(x_0,x_1,\ldots,x_d)  \in N_{n,d}^{d+1} \mid x_0 < x_1 < \cdots < x_d\}$.

\item For each $k \in N_{n,d}$, define the partial functions 
\[
\sigma_k^{+}: \V_{n,d} \to \V_{n,d} \quad\mbox{and}\quad \sigma_k^{-}  : \V_{n,d} \to \V_{n,d}
\]
by $\sigma_k^{\pm}(x) = y$, where 
\[
y_i= 
\begin{cases}
x_i \pm 1 & \mbox{if } i = k, \\
x_i & \mbox{if } i \neq k,
\end{cases}
\]
whenever such $y\in \V_{n,d}$ exists.

\item Define the relations $\eHom$ and $\eExt$ on $\V_{n,d}$ by
\[
x \eHom y
\quad \mbox {if and only if} \quad
x_0 \leqslant y_0 < x_1 \leqslant y_1 < \cdots < x_d \leqslant y_d,
\]
and
\[
x \eExt y
\quad \mbox {if and only if} \quad
x_0 < y_0 \leqslant x_1 < y_1 \leqslant \cdots \leqslant x_d < y_d.
\]

\item
Let $x,y \in \V_{n,d}$. We say that $x$ and $y$ interlace in case $x \eHom y$, $y \eHom x$, $x \eExt y$ or $y \eExt x$ hold. 

\item 
Let $\X, \Y \subseteq \V_{n,d}$. We say that $\X$ and $\Y$ interlace in case there are $x \in \X$ and $y \in \Y$ such that $x$ and $y$ interlace. 

\item
For $S\subseteq N_{n,d}$ we let $\X_S$ be the set of all $x \in \V_{n,d}$ such that $x_i \in S$ for all $i$. We say that $S,S' \subseteq N_{n,d}$ interlace in case $\X_S$ and $\X_{S'}$ interlace.

\item For $x \in \V_{n,d}$ we set $S_x =\{x_0,\ldots, x_{d}\} \subseteq N_{n,d}$. Note in particular that $\X_{S_x} = \{x\}$.
\end{enumerate}
\end{definition}

Next define a quiver $Q^{n,d}$ with vertices $\V_{n,d}$ and arrows $\alpha^x_{k} : x \to \sigma_k^+(x)$, for all $k \in N_{n,d}$ and $x \in \V_{n,d}$ such that $\sigma_k^+(x)$ is defined. 

Let $k,l \in N_{n,d}$ be two distinct elements and $x\in \V_{n,d}$ be such that $y \in \V_{n,d}$, where
\[
y_i= 
\begin{cases}
x_i + 1 & \mbox{if } i \in \{k,l\}, \\
x_i & \mbox{if } i \not \in \{k,l\}.
\end{cases}
\]
We introduce a relation $\rho^x_{kl}$ from $x$ to $y$ defined by
\[
\rho^x_{kl} = 
\begin{cases}
\alpha^x_{k}\alpha^{\sigma_k^+(x)}_{l} - \alpha^x_{l}\alpha^{\sigma_l^+(x)}_{k} & \mbox{if both $\sigma_k^+(x)$ and $\sigma_l^+(x)$ are defined}, \\
\alpha^x_{k}\alpha^{\sigma_k^+(x)}_{l} & \mbox{if $\sigma_k^+(x)$ is defined but $\sigma_l^+(x)$ is undefined}, \\
\alpha^x_{l}\alpha^{\sigma_l^+(x)}_{k} & \mbox{if $\sigma_l^+(x)$ is defined but $\sigma_k^+(x)$ is undefined},
 \end{cases}
\]
and let $I_{n,d}$ be the ideal in the path algebra $KQ^{n,d}$ generated by all $\rho^x_{kl}$.

The algebra $A_n^d = (KQ^{n,d}/I_{n,d})^{\rm op}$ is called a higher Auslander algebra of type $A$. Note that if $d = 0$, then $A_ n^d$ is just a path algebra of Dynkin type $A_n$, which strictly should not be called a higher Auslander algebra. We include it so that for $d >0$ we may say that $A_n^d$ is the higher Auslander algebra of $A_n^{d-1}$. To explain why this is so we define an indecomposable $A_{n}^{d-1}$-module $M_x$ for each $x \in \V_{n,d}$. As a representation $M_x$ assigns the vector space $K$ to all vertices $y \in \V_{n,d-1}$ such that $x_i \leqslant y_i  < x_{i+1}$ for all $0 \leqslant i \leqslant d-1$. To all other vertices $M_x$ assigns the zero vector space. Moreover, all arrows $K \to K$ act as the identity, while other arrows (by necessity) act as zero.

\begin{example}\label{example.41}
Let us consider $n =4 $ and $d = 2$. Then the quiver $Q^{4,1}$ is
\[
\xy 
(0,0)*+{12}="12";
(10,10)*+{13}="13";
(20,20)*+{14}="14";
(30,30)*+{15}="15";
(20,0)*+{23}="23";
(30,10)*+{24}="24";
(40,20)*+{25}="25";
(40,0)*+{34}="34";
(50,10)*+{35}="35";
(60,0)*+{45}="45";
{\ar^{} "12";"13"};
{\ar^{} "13";"14"};
{\ar^{} "14";"15"};
{\ar^{} "13";"23"};
{\ar^{} "14";"24"};
{\ar^{} "15";"25"};
{\ar^{} "23";"24"};
{\ar^{} "24";"25"};
{\ar^{} "24";"34"};
{\ar^{} "25";"35"};
{\ar^{} "34";"35"};
{\ar^{} "35";"45"};
\endxy
\]
The modules $M_{246}$ and $M_{136}$ are given as representations of $(Q^{4,1})^{\rm op}$ as follows
\[
\xy 
(0,20)*+{M_{246}:}="M";
(0,0)*+{0}="12";
(10,10)*+{0}="13";
(20,20)*+{0}="14";
(30,30)*+{0}="15";
(20,0)*+{0}="23";
(30,10)*+{K}="24";
(40,20)*+{K}="25";
(40,0)*+{K}="34";
(50,10)*+{K}="35";
(60,0)*+{0}="45";
{\ar_{0} "13";"12"};
{\ar_{0} "14";"13"};
{\ar_{0} "15";"14"};
{\ar_{0} "23";"13"};
{\ar_{0} "24";"14"};
{\ar_{0} "25";"15"};
{\ar_{0} "24";"23"};
{\ar_{1} "25";"24"};
{\ar_{1} "34";"24"};
{\ar_{1} "35";"25"};
{\ar_{1} "35";"34"};
{\ar_{0} "45";"35"};
\endxy
\quad \quad
\xy 
(0,20)*+{M_{136}:}="M";
(0,0)*+{0}="12";
(10,10)*+{K}="13";
(20,20)*+{K}="14";
(30,30)*+{K}="15";
(20,0)*+{K}="23";
(30,10)*+{K}="24";
(40,20)*+{K}="25";
(40,0)*+{0}="34";
(50,10)*+{0}="35";
(60,0)*+{0}="45";
{\ar_{0} "13";"12"};
{\ar_{1} "14";"13"};
{\ar_{1} "15";"14"};
{\ar_{1} "23";"13"};
{\ar_{1} "24";"14"};
{\ar_{1} "25";"15"};
{\ar_{1} "24";"23"};
{\ar_{1} "25";"24"};
{\ar_{0} "34";"24"};
{\ar_{0} "35";"25"};
{\ar_{0} "35";"34"};
{\ar_{0} "45";"35"};
\endxy
\]
\end{example}

\begin{theorem}\label{fundamental}\cite{I2}\cite[Section 3]{OT}
Let $n$ and $d$ be positive integers. Then $A_{n}^{d-1}$ has global dimension $d$ and admits a unique basic $d$-cluster tilting module
$$M = \bigoplus_{x \in \V_{n,d}} M_x.$$
Moreover, $\End_{A_{n}^{d-1}}(M)$ is isomorphic to $A_{n}^{d}$.
\end{theorem}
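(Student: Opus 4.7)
The plan is to proceed by induction on $d$, relying on Iyama's recursive construction of higher Auslander algebras from \cite{I2} and the detailed combinatorial analysis of these algebras in \cite{OT}. Since the theorem is cited from those sources, the task is really to explain the reduction and verify that the quiver-with-relations presentation of $A_n^d$ introduced in this paper matches the one used in the literature, so that the combinatorial labels $x \in \V_{n,d}$ correspond to the correct indecomposable summands of the $d$-cluster tilting module.

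For the base case $d = 1$, the algebra $A_n^0$ is the path algebra of $A_n$ with linear orientation, which is hereditary and representation finite, hence $1$-representation finite in the sense of \cite{IO}. Its unique basic $1$-cluster tilting module is the direct sum of all indecomposable $A_n^0$-modules, which are naturally indexed by the intervals $[x_0, x_1) \subseteq N_{n,1}$, i.e. by elements $x = (x_0, x_1) \in \V_{n,1}$; one checks that these intervals agree with the supports of the representations $M_x$ defined after Definition~\ref{combinatorics}. The endomorphism algebra of this module is the Auslander algebra of $A_n^0$, and a direct inspection of its Gabriel quiver reproduces $Q^{n,1}$ with the commutativity relations $\rho^x_{kl}$, so $\End_{A_n^0}(M) \cong A_n^1$.

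For the inductive step, assume the statement for $d-1$, so $A_n^{d-2}$ has global dimension $d-1$ and admits the unique basic $(d-1)$-cluster tilting module $M' = \bigoplus_{x \in \V_{n,d-1}} M_x$ with $\End_{A_n^{d-2}}(M') \cong A_n^{d-1}$. Applying Iyama's theorem \cite{I2} that the endomorphism algebra of a $(d-1)$-cluster tilting module in a $(d-1)$-representation finite algebra of global dimension $d-1$ is itself $d$-representation finite of global dimension $d$, we conclude that $A_n^{d-1}$ has global dimension $d$ and admits a unique basic $d$-cluster tilting module $M$. To identify the indecomposable summands of $M$ with the modules $M_x$ for $x \in \V_{n,d}$, one invokes the explicit bijection in \cite[Section 3]{OT} between the indecomposables of the $d$-cluster tilting subcategory of $\mod A_n^{d-1}$ and strictly increasing $(d+1)$-tuples in $N_{n,d}$, together with the explicit formula for the dimension vector of each indecomposable in terms of the inequalities $x_i \leqslant y_i < x_{i+1}$.

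The main obstacle is the bookkeeping step of verifying that the quiver-and-relations presentation of $A_n^d = (KQ^{n,d}/I_{n,d})^{\mathrm{op}}$ given in Subsection~\ref{Higher Auslander algebras} matches the presentation implicit in \cite{OT}. For this one needs to identify the generators of $\Rad_{\MM}(M_x, M_{\sigma_k^+(x)})/\Rad_{\MM}^2(M_x, M_{\sigma_k^+(x)})$ with the arrows $\alpha^x_k$ and check that the squares and boundary configurations in the quiver translate into precisely the relations $\rho^x_{kl}$; the opposite in the definition of $A_n^d$ simply reflects that $\End_{A_n^{d-1}}(M)^{\mathrm{op}}$ acts on $M$ on the left, converting a right-module convention into the quiver convention. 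Once this combinatorial matching is established, the stated properties of $A_n^{d-1}$ and the identification $\End_{A_n^{d-1}}(M) \cong A_n^d$ follow directly from \cite{I2} and \cite[Section 3]{OT}.
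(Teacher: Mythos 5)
The paper does not prove this theorem at all: it is imported wholesale from \cite{I2} and \cite[Section 3]{OT}, so there is no internal argument to compare against. Your outline of how those citations combine is broadly the right reduction (induction along the tower, base case the linearly oriented $A_n$ quiver and its Auslander algebra, plus a bookkeeping match between the presentation $(KQ^{n,d}/I_{n,d})^{\mathrm{op}}$ and the one in \cite{OT}), and the base case and the convention-matching remarks are fine.

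However, the inductive step as you state it rests on a non-theorem. It is \emph{not} true that the endomorphism algebra of the $(d-1)$-cluster tilting module of a $(d-1)$-representation finite algebra of global dimension $d-1$ is again $d$-representation finite of global dimension $d$: already for $d=2$ this fails, since the Auslander algebra of a representation finite hereditary algebra (global dimension $2$) admits a $2$-cluster tilting module essentially only in type $A$ with linear orientation; e.g.\ the Auslander algebra of $D_4$ is a counterexample. So the invariant ``$(d-1)$-representation finite of global dimension $d-1$'' is too weak to propagate through the induction. What Iyama actually proves in \cite{I2} is that the endomorphism algebra of the cluster tilting module of an \emph{(absolutely) $n$-complete} algebra is $(n+1)$-complete, and that the linearly oriented type $A$ path algebras are absolutely $1$-complete; the representation-finiteness and global dimension statements are consequences of carrying this stronger hypothesis along the tower. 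Your induction needs to be rephrased with $n$-completeness (or with the explicit type $A$ combinatorics of \cite{OT}) as the inductive invariant; as written, the key step appeals to a statement that is false in the claimed generality. The uniqueness of the basic $d$-cluster tilting module likewise follows from $d$-representation finiteness via \cite{IO}, which is fine once the correct inductive hypothesis is in place.
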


We let $\MM_{n,d} = \add\{M\}$, where $M$ is the $d$-cluster tilting module in Theorem~\ref{fundamental}. Then $\MM_{n,d}$ is $d$-abelian. Our aim is to classify the wide subcategories of $\MM_{n,d}$.

The isomorphism from $A_{n}^{d}$ to $\End_{A_{n}^{d-1}}(M)$ comes from realising the quiver of $\MM_{n,d}$ as $Q^{n,d}$. We illustrate this in one example.
\begin{example}\label{example.42}
Consider the case $n=4$ and $d = 2$. Below is the quiver of $\MM_{4,2}$ (i.e, $Q^{4,2}$).

\[
\xy 0;<3pt,0pt>:<0pt,1.9pt>:: 
(0,0)*+{M_{123}}="123";
(10,10)*+{M_{124}}="124";
(20,20)*+{M_{125}}="125";
(30,30)*+{M_{126}}="126";
(20,0)*+{M_{134}}="134";
(30,10)*+{M_{135}}="135";
(40,20)*+{M_{136}}="136";
(40,0)*+{M_{145}}="145";
(50,10)*+{M_{146}}="146";
(60,0)*+{M_{156}}="156";
(10,-30)*+{M_{234}}="234";
(20,-20)*+{M_{235}}="235";
(30,-10)*+{M_{236}}="236";
(30,-30)*+{M_{245}}="245";
(40,-20)*+{M_{246}}="246";
(50,-30)*+{M_{256}}="256";
(20,-60)*+{M_{345}}="345";
(30,-50)*+{M_{346}}="346";
(40,-60)*+{M_{356}}="356";
(30,-90)*+{M_{456}}="456";
{\ar^{} "123";"124"};
{\ar^{} "124";"125"};
{\ar^{} "125";"126"};
{\ar^{} "124";"134"};
{\ar^{} "125";"135"};
{\ar^{} "126";"136"};
{\ar^{} "134";"135"};
{\ar^{} "135";"136"};
{\ar^{} "135";"145"};
{\ar^{} "136";"146"};
{\ar^{} "145";"146"};
{\ar^{} "146";"156"};
{\ar^{} "134";"234"};
{\ar^{} "135";"235"};
{\ar^{} "136";"236"};
{\ar^{} "145";"245"};
{\ar^{} "146";"246"};
{\ar^{} "156";"256"};
{\ar^{} "234";"235"};
{\ar^{} "235";"236"};
{\ar^{} "235";"245"};
{\ar^{} "236";"246"};
{\ar^{} "245";"246"};
{\ar^{} "246";"256"};
{\ar^{} "245";"345"};
{\ar^{} "246";"346"};
{\ar^{} "256";"356"};
{\ar^{} "345";"346"};
{\ar^{} "346";"356"};
{\ar^{} "356";"456"};
\endxy
\]

Note in particular, that there is a path from $M_{136}$ to $M_{246}$, the modules appearing in Example~\ref{example.41}. Considering the relations defining $A_{4}^{2}$, this path should correspond to a nonzero morphism $\phi : M_{136} \to M_{246}$. Indeed, such a $\phi$ is easy to find. As a morphism of representations we may define it by $\phi_{24} = \phi_{25} = 1_K$ and $\phi_{ij} = 0$ for all other indices $ij$. Compare this with the fact that $(1,3,6) \eHom (2,4,6)$.
\end{example}

As in the above example, there is in general an obvious bijection between the indecomposables in $\MM_{n,d}$ and $Q^{n,d}_0$. Moreover, morphisms corresponding to the arrows in $Q^{n,d}_0$ are easy to write down. In fact morphisms and extensions between indecomposables in $\MM_{n,d}$ have been computed in \cite{OT}. Here we recall some of their results rewritten in our notation.

\begin{theorem}\label{HomAndExt}\cite[Theorem 3.6]{OT}
\begin{enumerate}[\rm(1)]
\item Let $x \in \V_{n,d-1}$ and $x' = (1,x_0+1,\ldots,x_{d-1}+1) \in \V_{n,d}$. Then $e_x A_{n}^{d-1} = M_{x'}$.
\item Let $y \in \V_{n,d-1}$ and $y' = (y_0,\ldots,y_{d-1},n+d) \in \V_{n,d}$. Then $D(A_{n}^{d-1}e_y) = M_{y'}$.
\item For $x,y \in \V_{n,d}$ we have
\[
\dim_K\Hom_{A_{n}^{d-1}}(M_x,M_y) = \begin{cases} 1 & \mbox{if } x \eHom y, \\ 0 & \mbox{else.} \end{cases}
\]
\item For $x,y \in \V_{n,d}$ we have
\[
\dim_K\Ext^d_{A_{n}^{d-1}}(M_y,M_x) = \begin{cases} 1 & \mbox{if } x \eExt y, \\ 0 & \mbox{else.} \end{cases}
\]
\end{enumerate}
\end{theorem}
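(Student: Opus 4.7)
The statement splits naturally into four parts. For (1) and (2), I would use a direct dimension-vector computation. With the presentation $A_{n}^{d-1} = (KQ^{n,d-1}/I_{n,d-1})^{\mathrm{op}}$, the projective $e_x A_n^{d-1}$, viewed as a representation of $Q^{n,d-1}$, assigns to each $y \in \V_{n,d-1}$ a space whose dimension equals the number of paths from $x$ to $y$ in $Q^{n,d-1}$ modulo the commutativity relations $\rho^z_{kl}$. Since these relations equate any two paths differing by reordering commuting $\sigma_k^+$-moves, the dimension is $1$ precisely when $y_i \geqslant x_i$ for all $i$, and $0$ otherwise. Unwinding the support condition $x'_i \leqslant y_i < x'_{i+1}$ for $x' = (1, x_0+1, \ldots, x_{d-1}+1)$ produces exactly this set of $y$, and the identification is pinned down by checking that the top of $M_{x'}$ is the simple at $x$. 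Part (2) is handled dually, using the socle of $M_{y'}$.

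For (3), the decisive point is that each $M_z$ is a thin representation of $Q^{n,d-1}$ with identity maps along arrows in its support $\{z' \in \V_{n,d-1} : z_i \leqslant z'_i < z_{i+1}\}$. A morphism $f: M_x \to M_y$ is therefore determined by a single scalar on each connected component of the intersection of supports, subject to compatibility with identity actions on neighbouring vertices inside the union of the supports. Tracking these constraints reduces the existence of a nonzero $f$ to checking that the target support ``sits diagonally below'' the source support, which upon unwinding is precisely the staircase $x_0 \leqslant y_0 < x_1 \leqslant y_1 < \cdots < x_d \leqslant y_d$, i.e., $x \eHom y$. When this holds the morphism is unique up to scalar, giving $\dim_K\Hom = 1$; otherwise the constraints force $f = 0$.

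For (4), the cleanest route is higher Auslander--Reiten duality on the $d$-cluster tilting subcategory $\MM_{n,d}$: for non-projective $M_y$,
\[
\Ext^d_{A_n^{d-1}}(M_y, M_x) \cong D\Hom_{A_n^{d-1}}(M_x, \tau_d M_y),
\]
where $\tau_d = \tau \Omega^{d-1}$ is the $d$-AR-translate. I would identify $\tau_d M_y = M_{y-\mathbf{1}}$, with $\mathbf{1} = (1,\ldots,1)$, either by induction on $d$ using the recursive construction of $A_n^{d-1}$, or by assembling a minimal projective resolution of $M_y$ from the projectives classified in (1). Plugging this into (3) yields $\Ext^d \neq 0$ iff $x \eHom (y-\mathbf{1})$, i.e.\ $x_i \leqslant y_i - 1 < x_{i+1}$ throughout, which is precisely $x \eExt y$. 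When $M_y$ is projective ($y_0 = 1$), both sides vanish, since $x \eExt y$ would force $x_0 < 1$.

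The main obstacle is the identification $\tau_d M_y = M_{y - \mathbf{1}}$ in part (4): this is where the combinatorics of $\V_{n,d}$ meets the higher homological algebra, and carrying it out rigorously requires either a delicate induction along the recursive tower of higher Auslander algebras or a lengthy explicit computation of the minimal resolution. Parts (1)--(3) are comparatively routine, though the bookkeeping of which weak vs.\ strict inequality in the $\eHom$/$\eExt$ conditions corresponds to which boundary of the box-shaped supports is easy to misalign on a first pass.
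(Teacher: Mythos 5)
The first thing to note is that the paper offers no proof of this statement: it is imported wholesale from \cite[Theorem 3.6]{OT} (with part (4) ultimately resting on Iyama's higher Auslander--Reiten duality). Your sketch is therefore necessarily an independent argument, and the question is whether it would close. As written it would not, because of a concrete error at the very first step. The ideal $I_{n,d-1}$ is \emph{not} generated only by commutativity relations: whenever exactly one of $\sigma_k^+(z)$, $\sigma_l^+(z)$ is defined, $\rho^z_{kl}$ is a \emph{zero} relation. Consequently the support of the indecomposable projective attached to $x$ is not the cone $\{y : y_i \geqslant x_i \mbox{ for all } i\}$ (nor its downward analogue), but a box. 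For instance, with $n=4$, $d=2$ and $x=(4,5)$ one has $x'=(1,5,6)$, and $M_{x'}$ is supported on $\{(1,5),(2,5),(3,5),(4,5)\}$, while the path $(3,4)\to(3,5)\to(4,5)$ is itself the relation $\rho^{(3,4)}_{01}$ and so contributes nothing. Indeed, your own unwinding of the support condition for $M_{x'}$ gives $y_0\leqslant x_0$ and $x_{i-1}<y_i\leqslant x_i$, which is visibly not the set $\{y: y_i\geqslant x_i \mbox{ for all } i\}$ you claim to have computed for the projective; the two descriptions of ``exactly this set of $y$'' do not agree, and the discrepancy is precisely the zero relations (compounded by a source/target confusion coming from the ${}^{\rm op}$ in the definition of $A_n^{d-1}$).

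This is not cosmetic: the box shape of the supports, i.e.\ the exact interleaving of weak and strict inequalities, is the entire content of parts (3) and (4), and in part (3) you assert rather than derive that the compatibility constraints amount to the staircase $x_0\leqslant y_0<x_1\leqslant y_1<\cdots<x_d\leqslant y_d$ --- this is exactly the bookkeeping you admit is ``easy to misalign,'' and it has already gone wrong in part (1). Two further points in part (4): higher AR duality reads $\Ext^d_{A_n^{d-1}}(M_y,M_x)\cong D\overline{\Hom}(M_x,\tau_d M_y)$ with $\Hom$ taken modulo maps factoring through injectives, so after identifying $\tau_d M_y=M_{y-\mathbf{1}}$ you would still need to rule out that the one-dimensional $\Hom(M_x,M_{y-\mathbf{1}})$ dies in the stable category; and that identification, which you correctly flag as the main obstacle, is left entirely open. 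A correct execution of your plan exists --- it is essentially what Oppermann and Thomas do --- but it hinges on first getting the box-shaped supports, and hence the zero relations, right.
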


Note that in particular Theorem \ref{HomAndExt}(3) implies that $\End_{A_{n}^{d-1}}(M_x) = K$, which verifies that $M_x$ is indeed indecomposable. 

By Theorem \ref{HomAndExt}(3) we find that if $x \eHom y$, then any path from $x$ to $y$ in $Q^{n,d}$ gives a basis of $\Hom_{A_{n}^{d-1}}(M_x,M_y)$. Similarly, Theorem \ref{HomAndExt}(4) can be made more explicit. 

\begin{theorem}\label{theorem.extensions}\cite[Theorem 3.8]{OT}
If $x \eExt y$, then there is an exact sequence 
\[
\E_{xy} : \quad 0 \to M_x \to E_d \to \cdots E_1 \to M_y \to 0,
\]
where $E_k = \bigoplus_{z} M_{z}$, taken over all $z \in \V_{n,d}$ such that $z_i \in \{x_i,y_i\}$ for all $i$ and $$|\{i \mid z_i = x_i\}| = k.$$ 
\end{theorem}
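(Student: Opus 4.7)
The plan is to construct the sequence explicitly and verify exactness pointwise on the quiver $Q^{n,d-1}$. By Proposition~\ref{dCTexact}(3), the sequence $\E_{xy}$ is $d$-exact in $\MM_{n,d}$ if and only if the underlying sequence of $A_n^{d-1}$-modules is exact, so my target is exactness in $\mod A_n^{d-1}$. Since $\Ext^d_{A_n^{d-1}}(M_y,M_x) \cong K$ by Theorem~\ref{HomAndExt}(4), Proposition~\ref{dCTexact}(4) tells us that up to equivalence there is a unique non-split $d$-extension of $M_y$ by $M_x$, so it is enough to produce any non-split exact representative.

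First I would define the differentials. For each $z \in \V_{n,d}$ with $z_i \in \{x_i,y_i\}$ and each position $j$ with $z_j = x_j$, let $z^{(j)}$ be obtained from $z$ by flipping coordinate $j$ from $x_j$ to $y_j$. Using $x \eExt y$ one checks $z^{(j)} \in \V_{n,d}$ and $z \eHom z^{(j)}$; hence by Theorem~\ref{HomAndExt}(3) there is a canonical nonzero morphism $\phi_{z,j}: M_z \to M_{z^{(j)}}$ acting as $1_K$ on every vertex $u$ where both source and target equal $K$. The differential $E_k \to E_{k-1}$ is then the Koszul-type alternating sum $\sum_{j:\, z_j = x_j}(-1)^{r(j;z)} \phi_{z,j}$, where $r(j;z)$ is the rank of $j$ in $\{i : z_i = x_i\}$. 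The identity $d\circ d = 0$ follows from the standard Koszul-style cancellation of the two orders in which any pair of coordinates can be flipped.

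The core of the proof is exactness at every vertex $w \in \V_{n,d-1}$. For each position $i \in \{0,\ldots,d\}$ set $A_i := \{x_i,y_i\} \cap (w_{i-1}, w_i]$ (with $w_{-1} := 0$ and $w_d := n+d$). Then $(M_z)_w = K$ exactly when $z_i \in A_i$ for every $i$, and writing $F := \{i : |A_i| = 2\}$ the admissible $z$'s are parametrised by subsets of $F$. Thus the evaluated complex at $w$ is, up to shift and sign, the augmented simplicial chain complex of the simplex on vertex set $F$, which is acyclic as soon as $F \neq \emptyset$.

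The main obstacle is therefore the combinatorial claim that the complex at $w$ vanishes entirely whenever $F = \emptyset$. This is where $x \eExt y$ enters crucially: since $x_0 < y_0$, any $w_0$ with $y_0 \leqslant w_0$ also satisfies $x_0 \leqslant w_0$, forcing $0 \in F$; dually $x_d < y_d$ forces $d \in F$ whenever $x_d > w_{d-1}$. This immediately rules out $F = \emptyset$ if $(M_x)_w$ or $(M_y)_w$ is nonzero. If some intermediate $E_k$ were nonzero at $w$ with $F = \emptyset$, the forced values would give a position $i$ with $A_i = \{x_i\}$, $A_{i+1} = \{y_{i+1}\}$, yielding $y_i > w_i$ and $x_{i+1} \leqslant w_i$ simultaneously, in contradiction with the inequality $y_i \leqslant x_{i+1}$ provided by $x \eExt y$. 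Once this pointwise exactness is established, non-splitting can be deduced from the fact that the resulting class in $\Ext^d_{A_n^{d-1}}(M_y,M_x)$ is nonzero (e.g.\ because the sequence has no intermediate summand isomorphic to $M_x \oplus M_y$), so the constructed $\E_{xy}$ represents the unique nontrivial equivalence class of $d$-extensions. The most delicate bookkeeping lies in verifying sign-compatibility of the identification of the evaluated complex with the reduced simplicial chain complex on $F$, and in carefully handling the boundary conventions at $i = 0$ and $i = d$.
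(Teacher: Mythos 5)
The paper does not actually prove this statement: it is quoted verbatim from \cite[Theorem 3.8]{OT}, so there is no internal proof to compare against, and your self-contained argument is welcome. Your overall strategy is sound and essentially complete: the identification of the evaluated complex at a vertex $w \in \V_{n,d-1}$ with the tensor product over $i$ of the elementary complexes determined by $A_i = \{x_i,y_i\} \cap (w_{i-1},w_i]$ is correct (note that the $A_i$ lie in pairwise disjoint increasing intervals, so every element of $\prod_i A_i$ really is a strictly increasing tuple in $\V_{n,d}$), and your combinatorial core --- that $A_0$, if a nonempty singleton, must be $\{x_0\}$, that $A_d$ must then be $\{y_d\}$, and that the resulting transition index $i$ gives $x_{i+1} \leqslant w_i < y_i$ contradicting $y_i \leqslant x_{i+1}$ --- is exactly the right use of $x \eExt y$ and is watertight.

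Two points need repair. First, your claim that $z^{(j)} \in \V_{n,d}$ for every admissible $z$ and every $j$ with $z_j = x_j$ is false: if $y_j = x_{j+1}$ and $z_{j+1} = x_{j+1}$, then $z^{(j)}$ has two equal coordinates and does not lie in $\V_{n,d}$. You must adopt the convention that such components of the differential are omitted; this is harmless for your pointwise analysis (at a fixed $w$ the target would have $(M_{z^{(j)}})_w = 0$ anyway, since $A_j$ and $A_{j+1}$ sit in disjoint intervals), and $d \circ d = 0$ survives because whenever an intermediate vertex of the cube is missing one checks $z \noteHom z^{(j,j')}$, so the corresponding Hom-space vanishes by Theorem~\ref{HomAndExt}(3). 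Second, your opening reduction --- that by uniqueness of the nontrivial class in $\Ext^d_{A_n^{d-1}}(M_y,M_x)$ it suffices to produce \emph{any} non-split exact representative --- does not prove the theorem as stated, since equivalent $d$-extensions may have different middle terms and the theorem prescribes the $E_k$ exactly; fortunately you then construct and verify the specific sequence directly, so this framing slip (and the somewhat hand-wavy non-splitness remark, which the statement does not require) can simply be deleted.
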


Note that $x \eExt y$ implies $x_i \neq y_i$ for all $i$, and so $|\{i \mid z_i = x_i\}| = k$ may be replaced with $$|\{i \mid z_i = y_i\}| = d-k+1$$ in the above condition. Moreover, we may deduce the following result which is useful for computing wide subcategories

\begin{proposition}\label{extensionClosed}
If $x \eExt y$, then $M_z \in \wide\{M_x,M_y\}$ for any $z \in \V_{n,d}$ that satisfies $z_i \in \{x_i,y_i\}$ for all $i$.
\end{proposition}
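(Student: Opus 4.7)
The plan is to use the characterisation of wide subcategories in Corollary~\ref{cor.wide}(2): it suffices to show that the $d$-extension $\E_{xy}$ from Theorem~\ref{theorem.extensions}, which by Proposition~\ref{dCTexact}(3) is a $d$-exact sequence in $\MM_{n,d}$, is a \emph{minimal} $d$-extension of $M_y$ by $M_x$ in the sense of Definition~\ref{minimal}. Once that is established, every $M_z$ with $z_i \in \{x_i,y_i\}$ appears as an indecomposable summand of some term in this minimal extension, and Corollary~\ref{cor.wide}(2) applied to $\WW = \wide\{M_x,M_y\}$ forces $M_z \in \wide\{M_x,M_y\}$.

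The key step is to verify that each differential $e_{k+1} \colon E_{k+1} \to E_k$ in $\E_{xy}$ is a radical morphism for $2 \leqslant k+1 \leqslant d$. According to Theorem~\ref{theorem.extensions}, the indecomposable summands $M_z$ of $E_k$ are indexed by those $z \in \V_{n,d}$ with $z_i \in \{x_i,y_i\}$ and $|\{i \mid z_i = x_i\}| = k$. Since this index set is disjoint from the one indexing the summands of $E_{k+1}$, every component morphism $M_z \to M_{z'}$ of $e_{k+1}$ has $z \neq z'$; these give non-isomorphic indecomposable modules by Theorem~\ref{HomAndExt}(3), so any morphism between them is automatically in $\Rad_{\MM_{n,d}}$. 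A matrix of radical morphisms between direct sums of indecomposables is itself a radical morphism, so $e_{k+1} \in \Rad_{\MM_{n,d}}(E_{k+1},E_k)$, confirming minimality via Proposition~\ref{radical}(3).

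Having established minimality, the conclusion is immediate: $M_x, M_y \in \wide\{M_x,M_y\}$, and the minimal $d$-extension of $M_y$ by $M_x$ has terms in $\wide\{M_x,M_y\}$ by Corollary~\ref{cor.wide}(2). Since the summands of these terms are exactly the $M_z$ with $z_i \in \{x_i,y_i\}$ and $1 \leqslant |\{i \mid z_i = x_i\}| \leqslant d$, and since $M_x, M_y$ themselves correspond to $k = d+1$ and $k = 0$, every $M_z$ prescribed by the statement lies in $\wide\{M_x,M_y\}$.

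The only subtle point is that Theorem~\ref{theorem.extensions} a priori only produces \emph{some} representative of the Yoneda class rather than the minimal one; my argument amounts to observing that this particular representative already satisfies the radical condition of Proposition~\ref{radical}(3), which reduces everything to the trivial fact that distinct $z$'s index non-isomorphic indecomposables. I do not foresee any genuine obstacle beyond this bookkeeping.
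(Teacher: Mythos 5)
Your proof is correct and follows essentially the same route as the paper: both arguments observe that each $M_z$ with $z_i \in \{x_i,y_i\}$ occurs as a summand of exactly one term of $\E_{xy}$, so the middle differentials are radical, hence $\E_{xy}$ is the minimal $d$-extension of $M_y$ by $M_x$, and Corollary~\ref{cor.wide} together with Proposition~\ref{dCTexact} forces all its terms into $\wide\{M_x,M_y\}$. Your write-up merely makes explicit the bookkeeping (disjoint index sets, radical matrices of morphisms between non-isomorphic indecomposables) that the paper compresses into one line.
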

\begin{proof}
Each module $M_z$ satisfying $z_i \in \{x_i,y_i\}$ for all $i$, appears as a summand in exactly one term of $\E_{xy}$. Hence the morphisms in $\E_{xy}$ are all radical morphisms. The claim now follows from Corollary~\ref{cor.wide} and Proposition~\ref{dCTexact}.
\end{proof}

\begin{corollary}\label{resolutions}
Let $x\in \V_{n,d}$ and $s \in N_{n,d}$ such that $s < x_0$. Then there is an exact sequence
\[
0 \to M_{x^d} \to \cdots \to M_{x^0} \to M_x \to 0
\]
where $x^i = (s,x_0,\ldots, x_{i-1}, x_{i+1},\ldots, x_d) \in \V_{n,d}$. In particular, if $s = 1$, this is the minimal projective resolution of $M_x$.
\end{corollary}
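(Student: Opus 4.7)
The plan is to apply Theorem \ref{theorem.extensions} to the pair $(a,x)$ where I set $a := (s, x_0, x_1, \ldots, x_{d-1}) = x^d$. First I verify that $a \in \V_{n,d}$ and that $a \eExt x$: the inequality $a_0 = s < x_0$ is the hypothesis, and for each $i \geqslant 1$ we have $a_i = x_{i-1} < x_i$, so the chain $a_0 < x_0 \leqslant a_1 < x_1 \leqslant \cdots \leqslant a_d < x_d$ defining $a \eExt x$ holds trivially.

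Next I identify the terms $E_k$ of the resulting $d$-extension $\E_{ax}$. For each coordinate, the allowed choices are $z_i \in \{a_i, x_i\} = \{x_{i-1}, x_i\}$, where $x_{-1} := s$. The key combinatorial observation is that choosing $z_i = a_i$ with $i \geqslant 1$ \emph{forces} $z_{i-1} = a_{i-1}$, since otherwise $z_{i-1} = x_{i-1} = z_i$ would violate $z \in \V_{n,d}$. Hence the admissible $z$ are parametrised by a single cut-off index $\ell \in \{-1, 0, \ldots, d\}$ with $z_j = a_j$ for $j \leqslant \ell$ and $z_j = x_j$ for $j > \ell$, giving $|\{i : z_i = a_i\}| = \ell+1$. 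Setting $k = \ell + 1$, the unique admissible $z$ is $(s, x_0, \ldots, x_{k-2}, x_k, \ldots, x_d) = x^{k-1}$. Thus each $E_k$ collapses to the single indecomposable $M_{x^{k-1}}$, and combined with $M_a = M_{x^d}$ this yields precisely the exact sequence claimed, after using Proposition \ref{dCTexact}(3) to pass between $d$-exactness in $\MM_{n,d}$ and exactness in $\mod A_n^{d-1}$.

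For the final claim, assume $s = 1$. By Theorem \ref{HomAndExt}(1) an indecomposable $M_z$ is projective whenever $z_0 = 1$, since then $M_z = e_w A_n^{d-1}$ for $w = (z_1 - 1, \ldots, z_d - 1) \in \V_{n,d-1}$. Each $x^i = (1, x_0, \ldots, x_{i-1}, x_{i+1}, \ldots, x_d)$ begins with $1$, so every term $M_{x^i}$ is projective. For minimality, note that the $x^i$ are pairwise distinct (they differ in which coordinate is omitted), so every morphism between consecutive terms is a morphism between non-isomorphic indecomposables and therefore lies in $\Rad_{\mod A_n^{d-1}}$; this is the standard criterion for minimality of a projective resolution.

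The only genuinely non-trivial step is the combinatorial identification of the $E_k$; everything else is an immediate application of earlier cited results, so I expect no real obstacle beyond being careful that the restriction $z \in \V_{n,d}$ really does collapse each term to a single summand.
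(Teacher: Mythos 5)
Your proposal is correct and follows exactly the paper's route: the sequence is identified as $\E_{x^d x}$ from Theorem~\ref{theorem.extensions}, with projectivity of the terms for $s=1$ coming from Theorem~\ref{HomAndExt}(1). You merely make explicit two points the paper leaves implicit — the combinatorial collapse of each $E_k$ to the single summand $M_{x^{k-1}}$, and the minimality of the resolution via radical differentials — and both of these verifications are accurate.
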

\begin{proof}
The sequence is precisely $\E_{x^dx}$ from Theorem~\ref{theorem.extensions}. If $s = 1$, then each $M_{x^i}$ is projective by Theorem~\ref{HomAndExt}(1).
\end{proof}

\section{Wide subcategories}
\subsection{Main result}
In this section we classify wide subcategories of $\MM_{n,d}$. We start by introducing the basic building blocks for such subcategories. Let $S \subseteq N_{n,d}$ and recall the subset $\X_S \subseteq \V_{n,d}$ from Definition~\ref{combinatorics}(7). We set
\[
\WW_S = \add\{M_x \mid x \in \X_S\} \subseteq \MM_{n,d}.
\]
Since $\X_{S_x} = \{x\}$ (see Definition~\ref{combinatorics}(8)), we have in particular that $\WW_{S_x} = \add\{M_x\}$.

Note that $\WW_S$ is non-zero if and only if $|S| \geqslant d+1$. In that case we call $S$ admissible. In the interest of brevity a set of admissible subsets of $N_{n,d}$ is called a collection. A collection is called non-interlacing if it has no two distinct members that interlace. With this terminology we are now ready to state the main result of our paper.

\begin{theorem}\label{main}
Let $n$ and $d$ be positive integers. Then there is a bijection 
\[
\{\mbox{non-interlacing collections of subsets of } N_{n,d}\} \rightarrow \{\mbox{wide subcategories of } \MM_{n,d}\}
\]
that sends a collection $\Sigma$ to $\add\{\WW_S \mid S \in \Sigma\}$.
\end{theorem}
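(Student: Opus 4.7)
The plan is to prove Theorem \ref{main} in two halves: showing that every $\add\{\WW_S \mid S \in \Sigma\}$ for a non-interlacing collection $\Sigma$ is wide, and conversely that every wide subcategory arises this way. The easier half begins by verifying that each single $\WW_S$ is wide, which I would do via Theorem \ref{theoremB}. Writing $|S| = m + d$, the heuristic is that the order-preserving bijection $S \leftrightarrow N_{m,d}$ realises $\WW_S$ as a copy of $\MM_{m,d}$ sitting inside $\MM_{n,d}$. The witnessing module is $P = \bigoplus M_x$ taken over $x \in \X_S$ with $x_0 = \min S$: its projective dimension is finite since $\gl A_n^{d-1} = d$; vanishing of $\Ext^i(P,P)$ for $1 \leqslant i \leqslant d-1$ comes from the $d$-cluster tilting property, while for $i = d$ it follows from Theorem \ref{HomAndExt}(4) since $x \eExt y$ forces $x_0 < y_0$; the required $\add P$-resolutions are the sequences of Corollary \ref{resolutions} with $s = \min S$, whose terms stay in $\X_S$; and a direct combinatorial check identifies $\End_A(P)$ with $A_m^{d-1}$ in such a way that $\Hom_A(P, \WW_S) \simeq \MM_{m,d}$ becomes the distinguished $d$-cluster tilting subcategory. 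Given this, iterated application of Proposition \ref{directsum} yields wideness of $\add\{\WW_S \mid S \in \Sigma\}$, since by Theorem \ref{HomAndExt}(3)(4) the non-interlacing of distinct $S, S' \in \Sigma$ is precisely the vanishing of $\Hom$ and $\Ext^d$ between $\WW_S$ and $\WW_{S'}$. Injectivity of the map is then immediate: $x \eHom x$ holds for every $x$, so any $y \in \X_S \cap \X_{S'}$ would make $S, S'$ interlace; hence the blocks $\X_S$ partition the indecomposables of the image subcategory, and $S$ is recovered as $\bigcup_{x \in \X_S}\{x_0, \ldots, x_d\}$.

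The heart of the argument is surjectivity. Given a wide $\WW \subseteq \MM_{n,d}$, set $\mathcal{I} = \{x \in \V_{n,d} \mid M_x \in \WW\}$. Two closure properties drive the classification: first, by Proposition \ref{extensionClosed}, whenever $x, y \in \mathcal{I}$ and $x \eExt y$, every $z$ with $z_i \in \{x_i, y_i\}$ also lies in $\mathcal{I}$; second, by Corollary \ref{cor.wide} the minimal $d$-kernel of each nonzero morphism $M_x \to M_y$ in $\WW$ (which exists by Theorem \ref{HomAndExt}(3) precisely when $x \eHom y$) has all terms in $\WW$, and Proposition \ref{kernel} pins down which indecomposables must occur. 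I would then partition $\mathcal{I}$ by the transitive closure of the interlacing relation and, for each class $C$, set $S_C = \bigcup_{x \in C}\{x_0, \ldots, x_d\}$, aiming to show $C = \X_{S_C}$. The induction I have in mind is on $|S_C| - (d+1)$: at each step one grows a known piece of $\X_{S_C}$ by incorporating one more element of $S_C$, using Corollary \ref{resolutions} to produce a convenient interlacing partner and then invoking Proposition \ref{extensionClosed} or a $d$-kernel step via Proposition \ref{kernel}. The resulting family $\{S_C\}$ is automatically non-interlacing, because an interlacing across two classes would by definition merge them.

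The main obstacle I anticipate is exactly this last combinatorial step: converting the abstract closure properties of $\mathcal{I}$ into the concrete statement that every equivalence class $C$ fills out all of $\X_{S_C}$. A subsidiary difficulty is supplying a sufficiently explicit description of the minimal $d$-kernel of a generic morphism $M_x \to M_y$ with $x \eHom y$, which I would extract from the detailed computations in \cite{OT} together with Proposition \ref{dCTexact}, using the latter to pass between exactness in $\mod A_n^{d-1}$ and the $d$-abelian structure on $\MM_{n,d}$.
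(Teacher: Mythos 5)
Your first half (well-definedness) follows the paper essentially verbatim: the same projective generator $P=\bigoplus_{x\in\X_S,\,x_0=\min S}M_x$, the same use of Theorem~\ref{theoremB}, Corollary~\ref{resolutions} and Proposition~\ref{directsum}. Two caveats. First, your injectivity argument is too quick: disjointness of the blocks $\X_S$ for $S$ in a single non-interlacing collection does not by itself show that the partition into blocks is determined by the subcategory (a priori $\X_{S_1}\cup\X_{S_2}$ could coincide with $\X_{S'}$ for a different collection). The paper closes this by observing that each $\WW_S$ is connected while distinct non-interlacing blocks admit no walks between them, so the blocks are recovered as the maximal elements of the poset of admissible $T$ with $\WW_T\subseteq\WW$; you need some version of this connectedness argument.

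The substantive problem is surjectivity. Your strategy (partition the indecomposables of $\WW$ into interlacing classes $C$, set $S_C=\bigcup_{x\in C}S_x$, and show $C=\X_{S_C}$ by adjoining one element of $S_C$ at a time) is exactly the paper's, but the step you yourself flag as ``the main obstacle'' is the entire content of the proof, and invoking Corollary~\ref{resolutions}, Proposition~\ref{extensionClosed} and Proposition~\ref{kernel} in general terms does not discharge it. In the paper this is Lemma~\ref{grow}: if $\Hom(\WW_S,M_x)\neq 0$ or $\Hom(M_x,\WW_S)\neq 0$ then $\wide\{\WW_S,M_x\}=\WW_{S\cup S_x}$. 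Its proof is a double induction (on $|S_x\setminus S|$ and on $|S|$) with a genuinely delicate case analysis on where the new element $a$ of $S_x\setminus S$ sits in $N_{n,d}$ (the boundary cases $a=1$, $a=n+d$, and the interior case split further according to whether $x_{k-1}<a-1$ and/or $x_{k+1}>a+1$), each case requiring carefully chosen auxiliary vertices that are extremal for a suitable quantity so that Lemma~\ref{kernel2} (the paper's device for locating a specific summand of a minimal $d$-kernel or $d$-cokernel, which is precisely the ``sufficiently explicit description'' you say you would need) produces a contradiction with minimality. It also requires separate base lemmas for the case of two vertices differing in a single coordinate (Lemma~\ref{2close}), for adjoining a simple module (Lemma~\ref{simples}), and for gluing two overlapping full blocks (Lemma~\ref{start}). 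None of this is routine, and your outline gives no indication of how the induction is organised or why it terminates; as written, the proposal identifies the right target but does not prove it.
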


\begin{remark}\label{interlace}
Note that by Theorem~\ref{HomAndExt}, two admissible subsets $S, S' \subseteq N_{n,d}$ interlace if and only if there are $M_x \in \WW_S$ and $M_{x'} \in \WW_{S'}$ such that one of the following conditions hold:
\[
\Hom_{A_n^{d-1}}(M_x, M_{x'}) \neq 0, \quad
\Ext^d_{A_n^{d-1}}(M_x, M_{x'}) \neq 0,
\]
\[
\Hom_{A_n^{d-1}}(M_{x'}, M_{x}) \neq 0, \quad
\Ext^d_{A_n^{d-1}}(M_{x'}, M_{x}) \neq 0.
\]
\end{remark}
In the remaining sections we will prove Theorem~\ref{main}. 

\subsection{Injectivity}
In this section we show that the map in Theorem~\ref{main} is well-defined and injective. We begin by showing that categories of the form $\WW_S$ are wide.

\begin{proposition}\label{prop.wide}
Let $S \subseteq N_{n,d}$ be admissible. Then
\begin{enumerate}[\rm(1)]
\item The subcategory $\WW_S \subseteq \MM_{n,d}$ is wide. 
\item Let $n' = |S|-d$ and $\iota : N_{n',d} \to S$ be the unique order preserving bijection. Then there is an equivalence from $\MM_{n',d}$ to $\WW_S$ sending $M_{x'}$ to $M_{\iota(x')}$, where $\iota(x') = (\iota(x_0'),\ldots, \iota(x_d'))$.
\item In particular, the terms of $d$-kernels, $d$-cokernels and $d$-extensions can be computed for $\WW_S$ as in $\MM_{n',d}$ using $\iota$.
\end{enumerate}

\end{proposition}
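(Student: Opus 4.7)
The plan is to prove all three parts simultaneously by applying Theorem \ref{theoremB}. Set $A = A_n^{d-1}$, $s_0 = \min S$, and
\[
P = \bigoplus_{\substack{x \in \X_S \\ x_0 = s_0}} M_x \in \WW_S.
\]
The indecomposable summands of $P$ are in natural bijection with $\V_{n',d-1}$ via $y \mapsto \iota(y'')$, where $y'' = (1, y_0+1, \ldots, y_{d-1}+1)$, mirroring the indexing of the indecomposable projective $A_{n'}^{d-1}$-modules from Theorem \ref{HomAndExt}(1).

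First I would verify conditions (1)--(3) of Theorem \ref{theoremB}. Condition (1) is immediate since $\gl A = d$ by Theorem \ref{fundamental}. For condition (2), $\Ext^i_A(P,P) = 0$ for $1 \leq i \leq d-1$ by $d$-cluster tilting; for $i = d$, Theorem \ref{HomAndExt}(4) together with the observation that $x \eExt y$ forces $x_0 < y_0$ rules out nonzero $\Ext^d$ between any two summands of $P$; and $i > d$ is covered by the global dimension bound. Condition (3) follows from Corollary \ref{resolutions} applied with $s = s_0$: for any $x \in \X_S$, the resulting resolution has terms $x^i = (s_0, x_0, \ldots, x_{i-1}, x_{i+1}, \ldots, x_d)$ which lie in $\X_S$ and all start with $s_0$, so $M_{x^i} \in \add P$.

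The main obstacle is condition (4), which requires $\Hom_A(P, \WW_S)$ to be $d$-cluster tilting in $\mod B$ with $B := \End_A(P)$. I would tackle this by establishing an algebra isomorphism $B \cong A_{n'}^{d-1}$ and then identifying $\Hom_A(P, \WW_S)$ with $\MM_{n',d}$. The key input is that the order-preserving bijection $\iota$ preserves both relations $\eHom$ and $\eExt$, so by Theorem \ref{HomAndExt} the Hom- and $\Ext^d$-spaces between summands of $P$ have the same dimensions as the corresponding spaces between indecomposable projectives of $A_{n'}^{d-1}$. Promoting this to an algebra isomorphism requires matching the quiver with relations; I would do this by exhibiting canonical path morphisms realizing the arrows of $Q^{n',d-1}$ inside $\End_A(P)$ and checking that they satisfy the relations $\rho^y_{kl}$ defining $I_{n',d-1}$, which is a direct computation with the explicit representations $M_x$ of $A$. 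Under the resulting isomorphism, applying $\Hom_A(P,-)$ to the $\add P$-resolution of $M_{\iota(x')}$ from condition (3) (using the $\Ext$-vanishing of condition (2)) produces a projective resolution over $B \cong A_{n'}^{d-1}$ whose terms match precisely those of the projective resolution of $M_{x'}$ given by Corollary \ref{resolutions}. Hence $\Hom_A(P, M_{\iota(x')}) \cong M_{x'}$, so $\Hom_A(P, \WW_S) = \MM_{n',d}$, which is $d$-cluster tilting by Theorem \ref{fundamental}.

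Theorem \ref{theoremB} now yields part (1), wideness of $\WW_S$, together with the equivalence $-\otimes_B P : \Hom_A(P, \WW_S) \to \WW_S$. Composing with the identification $\MM_{n',d} \cong \Hom_A(P, \WW_S)$ gives the equivalence of part (2), which by construction sends $M_{x'}$ to $M_{\iota(x')}$. Part (3) then follows because, by Proposition \ref{dCTexact}, $d$-kernels, $d$-cokernels, and $d$-extensions in both $\MM_{n',d}$ and $\WW_S$ are characterised by exactness of certain sequences in the ambient module categories, and any equivalence of $d$-abelian categories preserves these structures.
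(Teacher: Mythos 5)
Your proposal is correct and follows essentially the same route as the paper: the same choice of $P = \bigoplus_{x \in \X_S,\, x_0 = \min S} M_x$, verification of conditions (1)--(3) of Theorem \ref{theoremB} via the global dimension bound, Theorem \ref{HomAndExt}(4), and Corollary \ref{resolutions}, and the identification $\End_A(P) \cong A_{n'}^{d-1}$ followed by matching projective resolutions to show $\Hom_A(P,\WW_S) = \MM_{n',d}$. Your treatment of condition (2) is in fact slightly more explicit than the paper's, which only mentions the $\Ext^d$ vanishing.
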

\begin{proof}
We apply Theorem~\ref{theoremB}. Let $s = \min S$. Set $$P = \bigoplus_{x \in \X_S,\, x_0 = s} M_x.$$ Since $A_n^{d-1}$ has global dimension $d$ the projective dimension of $P$ is at most $d$. Theorem~\ref{HomAndExt}(4) implies $\Ext^d_{A_n^{d-1}}(P,P) = 0 $. The exact sequence in Corollary~\ref{resolutions} provides an $\add\{P\}$-resolution for each $M_x$, with $x \in \X_S$. 

It remains to show that Theorem~\ref{theoremB}(4) holds in this case. To this end, we first claim that $\End_{A_n^{d-1}}(P)$ is isomorphic to $A_{n'}^{d-1}$. To see this note that $\iota$ gives a bijection between the the vertices of $Q^{n',d-1}$ and the indecomposable summands of $P$ by sending $(x'_0,\ldots, x'_{d-1})$ to $M_x$, where $$x = (s,\iota(x'_0+1),\iota(x'_1+1),\ldots, \iota(x'_{d-1}+1)).$$ Given the descriptions of $A_{n'}^{d-1}$ and $\MM_{n,d}$ by quivers and relations, it is readily checked that this bijection extends to an isomorphism from $A_{n'}^{d-1}$ to $\End_{A_n^{d-1}}(P)$.

Hence $\mod \End_{A_n^{d-1}}(P)$ has a unique $d$-cluster tilting subcategory that we may identify with $\MM_{n',d}$. Next we claim that under this identification $\Hom_{A_{n'}^{d-1}}(P,-)$ sends $M_x \in \WW_S$ to $M_{\iota^{-1}(x)}$. If $x_0 = s$, then $M_x \in \add \{P\}$ and the claim is immediate. Otherwise the consider the $\add\{P\}$-resolution of $M_x$ given in Corollary~\ref{resolutions}. Applying $\Hom_{A_{n'}^{d-1}}(P,-)$ to this resolution we find the minimal projective resolution of $M_{\iota^{-1}(x)} \in \MM_{n',d}$ and the claim follows. Hence $\Hom_{A_{n'}^{d-1}}(P,\WW_S)$ is the $d$-cluster tilting subcategory of $\mod \End_{A_n^{d-1}}(P)$. 

Thus Theorem~\ref{theoremB} applies and the equivalence claimed in part (2) is given by the functor $-\otimes_{\End_{A_n^{d-1}}(P)} P$.
\end{proof}

\begin{corollary}\label{injective}
The map in Theorem~\ref{main} is well-defined and injective.
\end{corollary}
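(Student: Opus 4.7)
The plan is to verify well-definedness and injectivity separately, both reducing to properties already established in the excerpt.

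For well-definedness, I would fix a non-interlacing collection $\Sigma$ and show $\add\{\WW_S \mid S \in \Sigma\}$ is wide by induction on $|\Sigma|$. The base case $|\Sigma|=1$ is Proposition~\ref{prop.wide}(1). For the inductive step, pick any $S \in \Sigma$. For every $S' \in \Sigma \setminus \{S\}$ the pair $(S,S')$ is non-interlacing, so by Remark~\ref{interlace} we have
\[
\Hom_{A_n^{d-1}}(X,Y) = \Hom_{A_n^{d-1}}(Y,X) = \Ext^d_{A_n^{d-1}}(X,Y) = \Ext^d_{A_n^{d-1}}(Y,X) = 0
\]
for all $X \in \WW_S$ and $Y \in \WW_{S'}$. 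By biadditivity of $\Hom$ and $\Ext^d$ this vanishing persists between $\WW_S$ and $\add\{\WW_{S'} \mid S' \in \Sigma \setminus \{S\}\}$ (which is wide by induction), so Proposition~\ref{directsum} gives that $\add\{\WW_{S'} \mid S' \in \Sigma\}$ is wide.

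For injectivity, suppose $\Sigma$ and $\Sigma'$ are non-interlacing collections with $\WW := \add\{\WW_S \mid S \in \Sigma\} = \add\{\WW_{S'} \mid S' \in \Sigma'\}$. I would argue that the subcategories $\{\WW_S \mid S \in \Sigma\}$ are precisely the connected components of $\WW$ in the walk-sense of the Conventions subsection. On the one hand, each $\WW_S$ is connected: by Proposition~\ref{prop.wide}(2) it is equivalent to $\MM_{|S|-d,d}$, which is connected since the quiver $Q^{|S|-d,d}$ is connected. On the other hand, non-interlacing together with Remark~\ref{interlace} shows that for distinct $S, S'' \in \Sigma$ there are no non-zero morphisms between objects of $\WW_S$ and objects of $\WW_{S''}$; moreover the same inequality $|S \cap S''| \leqslant d$ (forced by non-interlacing, since any common $(d+1)$-subset $T$ yields $T \eHom T$) implies $\WW_S$ and $\WW_{S''}$ share no indecomposable, so they sit in different components. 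Applying the same argument to $\Sigma'$ gives the equality of sets $\{\WW_S \mid S \in \Sigma\} = \{\WW_{S'} \mid S' \in \Sigma'\}$. Finally, the subset $S$ is recovered from $\WW_S$ as $S = \bigcup \{S_x \mid M_x \in \WW_S\}$, since admissibility $|S| \geqslant d+1$ ensures every element of $S$ lies in some $(d+1)$-subset. Hence $\Sigma = \Sigma'$.

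The main obstacle, such as it is, lies in the connected-component identification: all the required tools are in place (the equivalence $\WW_S \simeq \MM_{|S|-d,d}$ from Proposition~\ref{prop.wide}(2) handles connectivity of each block, and Remark~\ref{interlace} handles the separation between distinct blocks), and the argument is essentially a bookkeeping exercise once one notes the self-interlacing trick $x \eHom x$ that rules out large intersections. No further computation is required beyond invoking the cited results.
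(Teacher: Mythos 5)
Your proof is correct and follows essentially the same route as the paper: well-definedness via Proposition~\ref{directsum} together with the vanishing of Hom and $\Ext^d$ between non-interlacing blocks (Remark~\ref{interlace}), and injectivity from the two facts that each $\WW_S$ is connected while non-interlacing separates distinct blocks completely. The only cosmetic difference is that you recover $\Sigma$ as the set of connected components of $\WW$ (plus the identity $S=\bigcup_{M_x\in\WW_S}S_x$), whereas the paper recovers it as the set of maximal admissible $S$ with $\WW_S\subseteq\WW$; both arguments rest on the same ingredients.
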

\begin{proof}
For a non-interlacing collection $\Sigma$ we need to show that $\add\{\WW_S \mid S \in \Sigma\}$ is wide. By Proposition~\ref{prop.wide}(1), each $\WW_S$ is wide. Moreover, since the sets $S$ are non-interlacing, there are no non-trivial morphisms or $d$-extensions between modules in $\WW_S$ and $\WW_{S'}$ for $S \neq S'$ (see Remark~\ref{interlace}). It follows from Proposition~\ref{directsum} that $\add\{\WW_S \mid S \in \Sigma\}$ is wide.

To show injectivity consider a wide subcategory $\WW$ in the image of the map in Theorem~\ref{main}. Let $\mathcal{P}_\WW$ be the poset of all admissible sets $S$ satisfying $\WW_S \subseteq \WW$ ordered by inclusion. Now write $\WW = \add\{\WW_S \mid S \in \Sigma\}$ for a non-interlacing collection $\Sigma = \{S_1, \ldots, S_l\}$. We claim that $\Sigma$ equals the set of maximal elements in $\mathcal{P}_\WW$. This implies injectivity as we can recover $\Sigma$ from $\WW$. 

To show the claim first note that for $i \neq j$ we have $S_i \not \subseteq S_j$ as $S_i$ and $S_j$ do not interlace. It remains to show for all $S \in \mathcal{P}_\WW$ that $S \subseteq S_i$ for some $i$. To do this note that $\WW_S$ is connected and satisfies
\[
\WW_S \subseteq \add\{\WW_{S_1}, \ldots, \WW_{S_l}\}.
\]
Since $\Sigma$ is non-interlacing there is no walk in $\WW$ connecting some $M_x \in \WW_{S_i}$ with some $M_y \in \WW_{S_j}$ for $i \neq j$, and so $\WW_S \subseteq \WW_{S_i}$ for some $i$, which implies $S \subseteq S_i$.
\end{proof}

\subsection{Surjectivity}
It remains to show that the map in Theorem~\ref{main} is surjective. We will do this using several Lemmas, each stating that a certain wide subcategory is of the form $\WW_S$ for some admissible set $S$. Lemmas \ref{start}, \ref{simples} and \ref{2close} take care of certain special cases that have to be dealt with separately.

\begin{lemma}\label{start}
Assume $n \geqslant 2$. Let $l \geqslant d+2$ and $s_1 < \cdots < s_l$ be elements in $N_{n,d}$. Set $S = \{s_1,\ldots, s_{l-1}\}$ and $S' = \{s_2,\ldots, s_l\}$. Then $$\wide\{\WW_S,\WW_{S'}\} = \WW_{S\cup S'}.$$
\end{lemma}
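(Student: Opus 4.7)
The plan is to establish the two containments separately. The containment $\wide\{\WW_S,\WW_{S'}\} \subseteq \WW_{S\cup S'}$ is immediate from Proposition~\ref{prop.wide}(1), since $\WW_{S\cup S'}$ is wide and contains both $\WW_S$ and $\WW_{S'}$. For the reverse inclusion, I must show every indecomposable $M_x$ with $x \in \X_{S\cup S'}$ lies in $\wide\{\WW_S,\WW_{S'}\}$. Because $x_0 < \cdots < x_d$, if $x_d \neq s_l$ then $x \in \X_S$, while if $x_0 \neq s_1$ then $x \in \X_{S'}$; so only the ``problematic'' case $x_0 = s_1$ and $x_d = s_l$ requires real attention.

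For such an $x$, my strategy is to produce a single $d$-extension $\E_{ab}$ of $M_b$ by $M_a$, with $a \in \X_S$ and $b \in \X_{S'}$, in which $M_x$ appears as a summand of some middle term; Proposition~\ref{extensionClosed} will then give $M_x \in \wide\{M_a,M_b\} \subseteq \wide\{\WW_S,\WW_{S'}\}$. To construct $a$ and $b$, I write $x_i = s_{j_i}$ with $1 = j_0 < j_1 < \cdots < j_d = l$. Since $l \geq d+2$, the telescoping identity $\sum_{i=0}^{d-1}(j_{i+1}-j_i-1) = l-1-d \geq 1$ forces some $k \in \{0,\ldots,d-1\}$ with $j_{k+1} \geq j_k + 2$. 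Fixing such a $k$, I set
\[
a_i = \begin{cases} s_{j_i} & \text{if } i \leq k,\\ s_{j_i-1} & \text{if } i > k,\end{cases} \qquad
b_i = \begin{cases} s_{j_i+1} & \text{if } i \leq k,\\ s_{j_i} & \text{if } i > k.\end{cases}
\]
Then $x_i \in \{a_i,b_i\}$ for every $i$ by construction.

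It remains to verify that $a \in \X_S$, that $b \in \X_{S'}$, and that $a \eExt b$. All three reduce to inequalities on the $j$-indices, which split into cases by the position of the indices relative to $k$. Within either of the regions $i \leq k$ or $i > k$, the needed inequalities follow directly from the strict monotonicity of the $j_i$ together with the trivial observations $j_i \leq l-1$ for $i \leq d-1$ and $j_i \geq 2$ for $i \geq 1$. Every comparison that straddles the transition $k \to k+1$ (strict monotonicity of $a$ and $b$ at the boundary, as well as the required $b_k \leq a_{k+1}$) collapses to $j_{k+1} \geq j_k + 2$, which holds by the choice of $k$. This transition is the main obstacle and the only place where the hypothesis $l \geq d+2$ is used essentially. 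Once these checks are complete, Theorem~\ref{theorem.extensions} produces $\E_{ab}$ with $M_x$ as a summand of the appropriate middle term, and the argument concludes.
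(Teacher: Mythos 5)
Your proof is correct and takes essentially the same route as the paper: both reduce the statement to Proposition~\ref{extensionClosed} by exhibiting, for each $x \in \X_{S\cup S'}$, vertices $a \in \X_S$ and $b \in \X_{S'}$ with $a \eExt b$ and $x_i \in \{a_i,b_i\}$ for all $i$. The only difference is presentational: the paper first normalises to $S \cup S' = N_{n,d}$ via Proposition~\ref{prop.wide}(3) and leaves the construction of $a$ and $b$ implicit, whereas you work directly with the indices $j_i$ and spell out the choice of the gap index $k$ (which is where $l \geqslant d+2$ enters), together with the verifications.
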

\begin{proof}
We apply Proposition~\ref{prop.wide}(3) to $S\cup S'$. Hence we may assume that $S = \{1,\ldots, n+d-1\}$ and $S' = \{2,\ldots, n+d\}$ so that $\WW_{S\cup S'} = \MM_{n,d}$. By Proposition~\ref{extensionClosed} any module $M_z \in \MM_{n,d}$ satisfies $M_z \in \wide\{M_x,M_y\}$ for some $M_x \in \WW_S$ and $M_y \in \WW_{S'}$. The claim follows. 
\end{proof}

\begin{lemma}\label{simples}
We have
\[
\wide\{M_{(1,2,\ldots, d+1)}, \WW_{\{2,3,\ldots,n+d\}}\} = \MM_{n,d}.
\]
\end{lemma}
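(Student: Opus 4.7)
The plan is to prove that every indecomposable $M_y \in \MM_{n,d}$ lies in $\WW := \wide\{M_{(1,2,\ldots,d+1)}, \WW_{\{2,3,\ldots,n+d\}}\}$. When $y_0 \geqslant 2$ this is immediate, since then $M_y$ is a summand of $\WW_{\{2,\ldots,n+d\}} \subseteq \WW$. The substantive case is $y_0 = 1$, which I will treat by induction on $(y_1,\ldots,y_d)$ in lexicographic order, with the base case $y = (1,2,\ldots,d+1)$ supplied by hypothesis.

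For the inductive step, any $y = (1, y_1, \ldots, y_d)$ distinct from $(1, 2, \ldots, d+1)$ must have some gap, i.e.\ an index $j \in \{1, \ldots, d\}$ with $y_j \geqslant y_{j-1} + 2$. I then define $x, v \in \V_{n,d}$ by
\[
x_i = \begin{cases} y_i & \text{if } i < j, \\ y_i - 1 & \text{if } i \geqslant j, \end{cases}
\qquad
v_i = \begin{cases} y_i + 1 & \text{if } i < j, \\ y_i & \text{if } i \geqslant j. \end{cases}
\]
A routine coordinate check, using only $y_j \geqslant y_{j-1} + 2$, verifies that $x$ and $v$ are strictly increasing tuples in $\V_{n,d}$ and that $x \eExt v$ (the only inequality requiring the gap is $v_{j-1} = y_{j-1}+1 \leqslant y_j-1 = x_j$). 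Moreover, $y_i = x_i$ when $i < j$ and $y_i = v_i$ when $i \geqslant j$, so $y_i \in \{x_i, v_i\}$ for every $i$. Proposition~\ref{extensionClosed} then places $M_y$ in $\wide\{M_x, M_v\}$. Since $v_0 = 2$, we have $M_v \in \WW_{\{2,\ldots,n+d\}} \subseteq \WW$, and since $x$ agrees with $y$ in positions $0, \ldots, j-1$ but satisfies $x_j < y_j$, it is strictly lex-smaller than $y$, so $M_x \in \WW$ by the induction hypothesis. Combining these gives $M_y \in \WW$, closing the induction.

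The only delicate point is to pick a construction that works uniformly even when $y_d = n+d$, where the naive attempt to shift the last coordinate upward would yield $v_d > n+d$. Keeping $v_i = y_i$ (and $x_i = y_i - 1$) for all $i \geqslant j$ avoids this problem, since then $v_d \leqslant n+d$ automatically. The degenerate case $n = 1$ needs no separate argument, as $\V_{1,d} = \{(1,2,\ldots,d+1)\}$ and the assertion reduces to the inductive base case.
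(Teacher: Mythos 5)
Your proof is correct, but it takes a genuinely different route from the paper. The paper argues by induction on $n$: the inductive hypothesis gives $\wide\{M_{(1,2,\ldots,d+1)},\WW_{\{2,\ldots,n+d-1\}}\}=\WW_{\{1,\ldots,n+d-1\}}$, and then Lemma~\ref{start} (gluing $\WW_{\{1,\ldots,n+d-1\}}$ and $\WW_{\{2,\ldots,n+d\}}$) finishes the step; this leans on Lemma~\ref{start} and, through it, on the reduction of Proposition~\ref{prop.wide}(3). You instead fix $n$ and run a lexicographic induction over the tuples $y$ with $y_0=1$, producing for each such $y$ an explicit pair $x \eExt v$ with $y_i\in\{x_i,v_i\}$, $v_0=2$, and $x$ lex-smaller, so that Proposition~\ref{extensionClosed} alone places $M_y$ in the wide closure. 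I checked the coordinate verifications: $x,v\in\V_{n,d}$, the chain $x_0<v_0\leqslant x_1<v_1\leqslant\cdots\leqslant x_d<v_d$ (with the gap $y_j\geqslant y_{j-1}+2$ used exactly at $v_{j-1}\leqslant x_j$), and the existence of a gap for any $y\neq(1,2,\ldots,d+1)$ all hold, and your handling of the boundary $y_d=n+d$ and of $n=1$ is sound. Your argument is more self-contained --- it bypasses Lemma~\ref{start} and the relabelling via Proposition~\ref{prop.wide}(3) entirely, at the cost of an explicit combinatorial construction --- whereas the paper's proof is shorter on the page because it reuses machinery already established for the surjectivity argument.
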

\begin{proof}
The proof is by induction on $n$. The case $n = 1$ is trivial. Assume $n > 1$. By induction hypothesis 
\[
\wide\{M_{(1,2,\ldots, d+1)}, \WW_{\{2,3,\ldots,n+d-1\}}\} = \WW_{\{1,2,\ldots,n+d-1\}}.
\]
Since $\WW_{\{2,3,\ldots,n+d-1\}} \subseteq \WW_{\{2,3,\ldots,n+d\}}$ we have 
\[
\wide\{M_{(1,2,\ldots, d+1)}, \WW_{\{2,3,\ldots,n+d\}}\} \supseteq \wide\{\WW_{\{1,2,\ldots,n+d-1\}},\WW_{\{2,3,\ldots,n+d\}}\}.
\]
Now applying Lemma~\ref{start} with $S = \{1,2,\ldots,n+d-1\}$ and $S' = \{2,3,\ldots,n+d\}$ we get 
\[
\wide\{M_{(1,2,\ldots, d+1)}, \WW_{\{2,3,\ldots,n+d\}}\} \supseteq \WW_{S\cup S'} = \MM_{n,d}.
\]
\end{proof}

\begin{lemma}\label{2close}
Let $x,x' \in \V_{n,d}$ such that $x_k \neq x'_k$ for some $k$ and $x_i = x'_i$ for all $i \neq k$. Then
\[
\wide\{M_x,M_{x'}\} = \WW_{S_x\cup S_{x'}}.
\]
\end{lemma}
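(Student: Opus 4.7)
The plan is to reduce to the case $n' = 2$ using Proposition~\ref{prop.wide}, and then analyze the minimal $d$-kernel and $d$-cokernel of the unique nonzero morphism between $M_x$ and $M_{x'}$, both of which can be read off from the essentially unique nontrivial $d$-extension in $\MM_{2, d}$.

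First I would observe that $|S_x \cup S_{x'}| = d+2$, since $x$ and $x'$ differ in exactly one coordinate, so by Proposition~\ref{prop.wide}(2,3) it suffices to prove the claim inside $\MM_{2, d}$. Write $\V_{2, d} = \{v_1, \ldots, v_{d+2}\}$, where $v_j$ is the tuple $(1, 2, \ldots, d+2)$ with the $j$-th entry removed. Under the order-preserving bijection $\iota : N_{2, d} \to S_x \cup S_{x'}$ (and after possibly swapping $x$ and $x'$), the modules $M_x$ and $M_{x'}$ correspond to two consecutive indecomposables $M_{v_{k+2}}$ and $M_{v_{k+1}}$ in $\MM_{2, d}$ for some $k \in \{0, \ldots, d\}$. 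By Theorem~\ref{HomAndExt}(4), the only pair in $\V_{2, d}^2$ related by $\eExt$ is $(v_{d+2}, v_1)$, so Theorem~\ref{theorem.extensions} yields the $d$-extension
\[
\E_{v_{d+2} v_1} : \quad 0 \to M_{v_{d+2}} \to M_{v_{d+1}} \to \cdots \to M_{v_1} \to 0,
\]
which is exact in $\mod A_2^{d-1}$ by Proposition~\ref{dCTexact}(3).

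Next I would exhibit the minimal $d$-kernel of the middle morphism $f : M_{v_{k+2}} \to M_{v_{k+1}}$ of $\E_{v_{d+2} v_1}$. By Proposition~\ref{dCTexact}(1), truncating $\E_{v_{d+2} v_1}$ to the left of $M_{v_{k+2}}$ and padding the remaining $k$ slots with zero objects yields a $d$-kernel with nonzero terms $M_{v_{k+3}}, \ldots, M_{v_{d+2}}$; exactness is inherited from $\E_{v_{d+2} v_1}$ at the nonzero positions and is trivial at the padded ones. The nonzero maps are radical since they go between pairwise non-isomorphic indecomposables (Theorem~\ref{HomAndExt}(3)) and the padded zero maps lie in the radical trivially, so Proposition~\ref{radical}(1) identifies this as the minimal $d$-kernel. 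A dual analysis of the right portion of $\E_{v_{d+2} v_1}$ shows that the minimal $d$-cokernel of $f$ has nonzero terms $M_{v_k}, \ldots, M_{v_1}$. By Corollary~\ref{cor.wide}, every $M_{v_j}$ for $1 \leq j \leq d+2$ must lie in $\wide\{M_{v_{k+1}}, M_{v_{k+2}}\}$, so this wide subcategory coincides with $\MM_{2, d}$, and transporting back via Proposition~\ref{prop.wide}(2) gives the claim.

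The most delicate step will be the zero-padding: one must verify that enlarging the truncated exact sequence by trivial zero terms really does produce a $d$-kernel satisfying the minimality condition of Proposition~\ref{radical}. Once this is accepted, the remainder of the argument reduces to bookkeeping against $\E_{v_{d+2} v_1}$.
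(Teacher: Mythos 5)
Your proof is correct and follows essentially the same route as the paper's: both exhibit the exact sequence running through all $d+2$ indecomposables of $\WW_{S_x\cup S_{x'}}$, locate the nonzero map between $M_x$ and $M_{x'}$ as an internal differential, and read off its minimal $d$-kernel and $d$-cokernel by truncating (and zero-padding) that sequence. The only cosmetic difference is that the paper produces the sequence directly from Corollary~\ref{resolutions} applied to $y=(x_1,\dots,x_k,x'_k,x_{k+1},\dots,x_d)$ with $s=x_0$, rather than first reducing to $\MM_{2,d}$ via Proposition~\ref{prop.wide} and invoking the uniqueness of the $\eExt$-related pair there.
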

\begin{proof}
Without loss of generality assume $x_k < x'_k$. Apply Corollary~\ref{resolutions} to 
\[y = (x_1,x_2,\ldots, x_k,x'_k,x_{k+1},\ldots, x_d)\]
with $s = x_0$ to obtain the exact sequence
\[
\E : \quad 0 \to M_{y^d} \to \cdots \to M_{y^0} \to M_y \to 0.
\]
Then $y^k = x$ and $y^{k-1} = x'$ (for the extreme case $k = 0$, this should be interpreted as $y = y^{-1} = x'$).

Extracting the morphism $M_{y^k} \to M_{y^{k-1}}$ we get a non-zero morphism $f: M_{x'} \to M_x$. Moreover, we may view the sequence $\E$ as the concatenation of the map $f$ with its minimal $d$-kernel and $d$-cokernel. In particular, the terms of the sequence all belong to $\wide\{M_x,M_{x'}\}$. But the terms are exactly the indecomposable modules in $\WW_{S_x\cup S_{x'}}$.
\end{proof}

In general the $d$-kernels and $d$-cokernels of morphisms $M_x \to M_y$ are not as easy to compute as in the proof of Lemma~\ref{2close}. Fortunately we will not need complete information about such $d$-kernels and $d$-cokernels to prove our results. More precisely we will make do with the following statement.

\begin{lemma}\label{kernel2}
Let $f : M_x \to M_y$ be a non-zero morphism.
\begin{enumerate}[\rm(1)]
\item If $x_{k-1} < y_{k-1} < x_{k}$, then for $x' = (x_0,\ldots,x_{k-1},y_{k-1},x_{k+1},\ldots,x_{d})$ the module $M_{x'}$ appears as a summand in the term $K_1$ in any $d$-kernel $K_d \rightarrow \cdots \rightarrow K_1 \rightarrow M_x$ of $f$. In particular, $M_{x'} \in \wide\{M_x,M_y\}$ and there is a non-zero morphism $M_{x'} \to M_x$.
\item If $y_k < x_{k+1} < y_{k+1}$, then for $y' = (y_0,\ldots,y_{k-1},x_{k+1},y_{k+1},\ldots,y_{d})$ the module $M_{y'}$ appears as a summand in the term $C_1$ of any $d$-cokernel $M_y \rightarrow C_1 \rightarrow \cdots \rightarrow C_d$ of $f$. In particular, $M_{y'} \in \wide\{M_x,M_y\}$ and there is a non-zero morphism $M_{y} \to M_{y'}$.
\end{enumerate}
\end{lemma}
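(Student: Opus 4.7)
The plan is to exploit the universal property of $d$-kernels together with the rigidity of composition in $\MM_{n,d}$. Proposition~\ref{kernel} nearly suffices, but it requires the morphism $M_{x'}\to M_x$ to lie outside $\Rad^2_{\MM}(M_{x'},M_x)$, which only happens when $x_k-y_{k-1}=1$; a sharper argument is therefore needed.

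First I verify the combinatorics. The assumption $x_{k-1}<y_{k-1}<x_k$ together with $x\in\V_{n,d}$ ensures that $x'\in\V_{n,d}$. A direct inspection of the $\eHom$-inequalities gives $x'\eHom x$, so by Theorem~\ref{HomAndExt}(3) there is a non-zero morphism $g\colon M_{x'}\to M_x$. At the same time $x'\noteHom y$, because the step $y_{k-1}<x'_k=y_{k-1}$ at position $k-1$ fails; hence $\Hom_{A_n^{d-1}}(M_{x'},M_y)=0$ and in particular $f\circ g=0$.

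Next, applying $\Hom_{A_n^{d-1}}(M_{x'},-)$ to the $d$-kernel gives an exact sequence whose rightmost term vanishes, so the map $\Hom(M_{x'},K_1)\to\Hom(M_{x'},M_x)$ is surjective and we lift $g$ to a non-zero $\tilde g\colon M_{x'}\to K_1$. Decompose $K_1$ into indecomposables as $K_1=\bigoplus_j M_{u_j}$, writing $\tilde g=(\tilde g_j)$ and denoting by $k_{1,j}\colon M_{u_j}\to M_x$ the components of the last differential $k_1\colon K_1\to M_x$. Since $g=\sum_j k_{1,j}\circ\tilde g_j$ is non-zero, some index $j_0$ satisfies $k_{1,j_0}\circ\tilde g_{j_0}\neq 0$, which forces both $x'\eHom u_{j_0}$ and $u_{j_0}\eHom x$ via Theorem~\ref{HomAndExt}(3).

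The $d$-kernel relation $f\circ k_1=0$ restricts to $f\circ k_{1,j_0}=0$. If one had $u_{j_0}\eHom y$, then $\Hom(M_{u_{j_0}},M_y)$ would be one-dimensional and spanned by the class of any path $u_{j_0}\to y$ in $Q^{n,d}$; as all such paths agree modulo the commutativity relations $\rho^x_{kl}$, the composition $f\circ k_{1,j_0}$ would be a non-zero multiple of this generator, contradicting $f\circ k_{1,j_0}=0$. Therefore $u_{j_0}\noteHom y$. A coordinate-by-coordinate inspection of $x'\eHom u\eHom x$ then shows $u_i=x_i$ for $i\neq k$ and $u_k\in\{y_{k-1},\ldots,x_k\}$, while $u\noteHom y$ reduces to $u_k=y_{k-1}$; consequently $u_{j_0}=x'$, so $M_{x'}$ is a summand of $K_1$. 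The assertion $M_{x'}\in\wide\{M_x,M_y\}$ is immediate from Corollary~\ref{cor.wide}, and part~(2) is proved dually with the $d$-cokernel of $f$ in place of the $d$-kernel. The main subtlety is the rigidity step, where the vanishing $\Hom(M_{x'},M_y)=0$ combines with the path-algebra description of morphisms in $\MM_{n,d}$ to force the summand $M_{x'}$ without needing $g\notin\Rad^2_{\MM}$.
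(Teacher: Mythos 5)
Your proof is correct, but it reaches the conclusion by a genuinely different route from the paper. The paper deals with the $\Rad^2$ obstruction you point out not by refining the lifting argument but by changing the ambient category: it notes that the minimal $d$-kernel of $f$ has its terms in the wide subcategory $\WW_{S}$ with $S=S_x\cup S_y$, which by Proposition~\ref{prop.wide}(3) is identified with some $\MM_{n',d}$; since no element of $S$ lies strictly between $y_{k-1}$ and $x_k$, under this identification $x'$ becomes $\sigma_k^{-}(x)$, so the morphism $M_{x'}\to M_x$ is irreducible there and Proposition~\ref{kernel} applies at once. Your argument instead stays in $\MM_{n,d}$: you lift $g$ through the $d$-kernel and use the rigidity of composition --- non-zero morphisms along $u\eHom x\eHom y$ with $u\eHom y$ compose to something non-zero, which is justified by the remark after Theorem~\ref{HomAndExt} that any path in $Q^{n,d}$ spans the corresponding one-dimensional Hom-space --- to pin the relevant summand of $K_1$ down to $u_{j_0}=x'$ exactly. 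The trade-off: the paper's reduction is shorter and recycles existing machinery, though as stated Proposition~\ref{kernel} applied inside $\WW_S$ only covers $d$-kernels with terms in $\WW_S$, so passing to an arbitrary $d$-kernel tacitly uses the direct-summand property of the minimal $d$-kernel from Proposition~\ref{radical}(1); your argument applies verbatim to any $d$-kernel and makes the identification of the forced summand explicit. One cosmetic slip: the condition of $x'\eHom y$ that fails is the strict inequality $y_{k-1}<x'_k$, which you describe somewhat confusingly as a failure ``at position $k-1$'' of $y_{k-1}<x'_k=y_{k-1}$; the substance is nevertheless right.
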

\begin{proof}
We only prove (1) as (2) is dual.

The $d$-kernels of $f$ may be computed in the wide subcategory $\WW_{S}$, where $S = S_x \cup S_y$. By Proposition~\ref{prop.wide}(3), we may assume that $S = N_{n,d}$ and $\WW_{S} = \MM_{n,d}$. Then $x' = \sigma_k^-(x)$ and so there is an arrow in $Q_n^d$ from $x'$ to $x$. Hence there is a corresponding morphism $g \in \Rad_{\MM_{n,d}}(M_{x'},M_x) \setminus \Rad^2_{\MM_{n,d}}(M_{x'},M_x)$. Notice that $x' \noteHom y$ and so $\Hom_{A_n^{d-1}}(M_{x'},M_y) = 0$ by Theorem~\ref{HomAndExt}(3). In particular, $f\circ g = 0$ and the claim follows from Proposition~\ref{kernel}.
\end{proof}

Before continuing with the proof we illustrate the utility of Lemma~\ref{kernel2} in an example.
\begin{example}\label{example.wide}
Let $n = 4$ and $d = 2$ as in Example~\ref{example.42} and consider $\WW = \wide\{M_{136}, M_{246}\}$. Since $\{1,3,6\}$ and $\{2,4,6\}$ interlace, we must have $\WW = \WW_{\{1,2,3,4,6\}}$ in order for Theorem~\ref{main} to hold. Indeed, if $\WW = \add\{\WW_S \mid S \in \Sigma\}$ for some non-interlacing collection $\Sigma$, we must have $\{1,3,6\} \subseteq S$ and $\{2,4,6\} \subseteq S$ for some common $S \in \Sigma$ and so the smallest possible collection is $\Sigma = \{\{1,2,3,4,6\}\}$. Below is the quiver of $\MM_{4,2}$ with the indecomposables in $\WW_{\{1,2,3,4,6\}}$ underlined.

\[
\xy 0;<3pt, 0pt>:<0pt,1.9pt>:: 
(0,0)*+{\underline{M_{123}}}="123";
(10,10)*+{\underline{M_{124}}}="124";
(20,20)*+{M_{125}}="125";
(30,30)*+{\underline{M_{126}}}="126";
(20,0)*+{\underline{M_{134}}}="134";
(30,10)*+{M_{135}}="135";
(40,20)*+{\underline{M_{136}}}="136";
(40,0)*+{M_{145}}="145";
(50,10)*+{\underline{M_{146}}}="146";
(60,0)*+{M_{156}}="156";
(10,-30)*+{\underline{M_{234}}}="234";
(20,-20)*+{M_{235}}="235";
(30,-10)*+{\underline{M_{236}}}="236";
(30,-30)*+{M_{245}}="245";
(40,-20)*+{\underline{M_{246}}}="246";
(50,-30)*+{M_{256}}="256";
(20,-60)*+{M_{345}}="345";
(30,-50)*+{\underline{M_{346}}}="346";
(40,-60)*+{M_{356}}="356";
(30,-90)*+{M_{456}}="456";
{\ar^{} "123";"124"};
{\ar^{} "124";"125"};
{\ar^{} "125";"126"};
{\ar^{} "124";"134"};
{\ar^{} "125";"135"};
{\ar^{} "126";"136"};
{\ar^{} "134";"135"};
{\ar^{} "135";"136"};
{\ar^{} "135";"145"};
{\ar^{} "136";"146"};
{\ar^{} "145";"146"};
{\ar^{} "146";"156"};
{\ar^{} "134";"234"};
{\ar^{} "135";"235"};
{\ar^{} "136";"236"};
{\ar^{} "145";"245"};
{\ar^{} "146";"246"};
{\ar^{} "156";"256"};
{\ar^{} "234";"235"};
{\ar^{} "235";"236"};
{\ar^{} "235";"245"};
{\ar^{} "236";"246"};
{\ar^{} "245";"246"};
{\ar^{} "246";"256"};
{\ar^{} "245";"345"};
{\ar^{} "246";"346"};
{\ar^{} "256";"356"};
{\ar^{} "345";"346"};
{\ar^{} "346";"356"};
{\ar^{} "356";"456"};
\endxy
\]

We verify that $\WW = \WW_{\{1,2,3,4,6\}}$ does indeed hold by applying Lemma~\ref{kernel2} repeatedly. 

Considering the $2$-kernel of the nonzero morphism $M_{136} \to M_{246}$ we find that $M_{126}, M_{134} \in \WW$. Considering the $2$-cokernel we find $M_{346} \in \WW$. Similarly, the $2$-cokernel of the the nonzero morphism $M_{134} \to M_{136}$ gives $M_{146} \in \WW$.

Next we consider the nonzero morphism $M_{126} \to M_{136}$ and get $M_{123}, M_{236} \in \WW$. This allows us to consider $M_{236} \to M_{246}$, which gives $M_{234} \in \WW$, and then $M_{134} \to M_{234}$, which gives $M_{124} \in \WW$. 

Hence all $10$ indecomposables in $\WW_{\{1,2,3,4,6\}}$ lie in $\WW$. 
\end{example}

We now continue with the general case.
The following Lemma is the main tool in the proof of Theorem~\ref{main} and can be thought of as a generalisation of Example~\ref{example.wide}. The proof is similar to the strategy of Example~\ref{example.wide} in that we successively build up more and more objects in a certain wide subcategory. 

\begin{lemma}\label{grow}
Let $x \in \V_{n,d}$ and $S \subseteq N_{n,d}$ admissible. If 
\[
\Hom_{A_n^{d-1}}(\WW_S, M_x) \neq 0 \quad \mbox{or} \quad
\Hom_{A_n^{d-1}}(M_x, \WW_S) \neq 0,
\]
then $\wide\{\WW_S, M_x\} = \WW_{S\cup S_x}$.
\end{lemma}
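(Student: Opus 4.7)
Plan. The easy direction $\wide\{\WW_S, M_x\} \subseteq \WW_{S\cup S_x}$ follows since $\WW_{S\cup S_x}$ is wide by Proposition~\ref{prop.wide}(1) and contains both $\WW_S$ (as $S \subseteq S\cup S_x$) and $M_x$ (as $S_x \subseteq S\cup S_x$). For the reverse inclusion, apply the equivalence of Proposition~\ref{prop.wide}(3) to the admissible set $S \cup S_x$ to reduce to the case $S \cup S_x = N_{n,d}$ and $\WW_{S\cup S_x} = \MM_{n,d}$; it then suffices to prove $\wide\{\WW_S, M_x\} = \MM_{n,d}$. By hypothesis and Theorem~\ref{HomAndExt}(3) there is a non-zero morphism between $M_x$ and some $M_y$ with $y \in \X_S$; the two cases ($y \eHom x$ and $x \eHom y$) being symmetric under $D$, I assume $f : M_y \to M_x$ is non-zero with $y \eHom x$, and set $\WW := \wide\{\WW_S, M_x\}$.

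The strategy mirrors Example~\ref{example.wide}: starting with $\X_S \cup \{x\} \subseteq \{z \in \V_{n,d} : M_z \in \WW\}$, iteratively apply Lemma~\ref{kernel2} to non-zero morphisms between modules already known to lie in $\WW$ in order to enlarge this set. Each application of Lemma~\ref{kernel2}(1) to a non-zero $M_u \to M_v$ at an index $k$ with $u_{k-1} < v_{k-1} < u_k$ places a new indecomposable (with coordinates in $S_u \cup S_v$) into $\WW$, and Lemma~\ref{kernel2}(2) contributes analogously from $d$-cokernels; Lemma~\ref{2close} complements this by filling in all modules spanned by any pair in $\WW$ differing in a single coordinate, and Proposition~\ref{extensionClosed} does so for any $\eExt$-related pair. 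As a natural opening move I would process $f$ to produce the ``staircase'' elements $z^k := (y_0, \ldots, y_{k-1}, x_k, \ldots, x_d)$ for $0 \leqslant k \leqslant d+1$ (with $z^0 = x$, $z^{d+1} = y$), which are well-defined precisely because $y \eHom x$; then repeat the procedure using newly-produced modules in combination with elements of $\X_S$.

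The main obstacle is the combinatorial verification that this iteration reaches every $z \in \V_{n,d}$. I would organise this as an induction on a measure such as $|S_z \setminus S|$, with the base case $S_z \subseteq S$ immediate from $M_z \in \WW_S$; the inductive step requires locating, for each not-yet-produced $z$, a pair of already-produced modules $M_u, M_v \in \WW$ and an index $k$ at which Lemma~\ref{kernel2}'s strict inequalities hold and whose output is exactly $M_z$. The assumption $S \cup S_x = N_{n,d}$ is precisely what ensures such pairs always exist: every element of $N_{n,d}$ is already a coordinate of some module in $\X_S \cup \{x\}$, so the needed coordinate substitutions are always available and no combinatorial gap can obstruct the process.
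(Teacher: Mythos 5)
Your opening moves match the paper's: the inclusion $\wide\{\WW_S, M_x\} \subseteq \WW_{S\cup S_x}$ is indeed immediate from Proposition~\ref{prop.wide}(1), the reduction to $S \cup S_x = N_{n,d}$ via Proposition~\ref{prop.wide}(3) is the right normalisation, and iterating Lemma~\ref{kernel2} together with Lemma~\ref{2close} is exactly the engine the paper uses. But the proof stops where the actual work begins. The final sentence --- that the assumption $S \cup S_x = N_{n,d}$ ensures ``the needed coordinate substitutions are always available and no combinatorial gap can obstruct the process'' --- is precisely the statement that has to be proved, and it is not automatic. Lemma~\ref{kernel2} only applies when the \emph{strict} inequalities $u_{k-1} < v_{k-1} < u_k$ (resp.\ $v_k < u_{k+1} < v_{k+1}$) hold, and these can fail: this is why the paper's proof splits into Steps 1.1--1.3.3 according to whether the new coordinate $a$ equals $1$, equals $n+d$, or lies strictly between, and then further according to whether $x_{k-1} < a-1$ and/or $x_{k+1} > a+1$. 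In the degenerate subcases (e.g.\ $x_{k+1} = a+1$) one cannot produce the desired module directly and must first replace $x$ by an extremal representative (chosen to maximise or minimise $\sum_i x'_i$ over modules already known to lie in $\WW$), invoke the auxiliary Lemmas~\ref{start} and \ref{simples}, and run a secondary induction. None of this is present or even signalled in your argument.

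Two more specific problems. First, your ``staircase'' elements $z^k = (y_0,\ldots,y_{k-1},x_k,\ldots,x_d)$ are not produced by a single application of Lemma~\ref{kernel2}: that lemma only identifies summands of the \emph{first} term $K_1$ (resp.\ $C_1$), i.e.\ it changes one coordinate at a time, and only at indices where the inequality is strict (if $y_{k-1} = x_{k-1}$ nothing new is obtained there). Producing $z^k$ for larger $k$ requires iterating on new morphisms whose existence and non-vanishing must be checked at each stage. Second, your proposed induction on $|S_z \setminus S|$ for the \emph{target} $z$ does not match any workable scheme: the paper instead inducts on $|S|$ (Step 1) and on $|S_x \setminus S|$ for the \emph{given} module $x$ (Step 2), and in Step 2 the key point is that one application of Lemma~\ref{kernel2}(1) followed by Lemma~\ref{kernel2}(2) replaces a coordinate of $x$ not in $S$ by one in $S$, strictly decreasing the measure --- an argument you would need to supply explicitly. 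As it stands the proposal is a plan for a proof, with the combinatorial core asserted rather than established.
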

\begin{proof}
Recall from Theorem~\ref{HomAndExt}(3) that
\[
\dim_K\Hom_{A_{n}^{d-1}}(M_x,M_y) = \begin{cases} 1 & \mbox{if } x \eHom y, \\ 0 & \mbox{else.} \end{cases}
\]
We will use this freely throughout the proof to characterise when $\Hom_{A_{n}^{d-1}}(M_x,M_y) = 0$.

Set $\WW = \wide\{\WW_S, M_x\}$. It is enough to show the inclusion $\WW_{S\cup S_x} \subseteq \WW$. Note that the claim is trivial if $S_x \subseteq S$. We proceed in several steps.

Step 1: First consider the case when $S_x$ contains only one element $a$ that is not in $S$, i.e., $S\cup S_x = S\cup \{a\}$. We use induction on $|S|$. If $|S| = d+1$, then $\WW_S$ consists of a single indecomposable $M_{x'}$ and the claim follows by Lemma~\ref{2close}.

Next assume that $|S| > d+1$. By Proposition~\ref{prop.wide}(3), we may assume $S\cup \{a\} = N_{n,d}$ so that $\WW_{S\cup S_x} = \MM_{n,d}$. We proceed with a case by case analysis depending on the value of $a$. Our strategy in each case is to produce more and more elements of $\WW$ to finally reach $\MM_{n,d}.$

Step 1.1: Assume $a = 1$, i.e., $S = \{2,\ldots, n+d\}$. Then $x_0 =1$ and we have $\Hom_{A_n^{d-1}}(\WW_S,M_x) = 0$ so that
\[
\Hom_{A_n^{d-1}}(M_x, \WW_S) \neq 0
\]
which implies $x_1\geqslant 3$. Next let $x'\in \V_{n,d}$ be such that $x'_0 = 1$, $x'_1\geqslant 3$, $M_{x'} \in \WW$ and the value of $\sum_{i=1}^d x'_i$ is as small as possible. We claim that $$x' = (1,3,4,\ldots,d+2).$$ Otherwise there is some $b\geqslant 3$ satisfying $x'_{k-1} < b < x'_{k}$ for some $k \geqslant 1$. Set 
\[y = \begin{cases}
(2,x'_1,\ldots,x'_{k-2},b,x'_{k},\ldots, x'_d), & \mbox{if } k\geqslant 2, \\
(b,x'_1,\ldots, x'_d), & \mbox{if } k = 1.
\end{cases}\]
Then $M_y \in \WW_S$ and applying Lemma~\ref{kernel2}(1) to a non-zero morphism $M_{x'} \to M_y$ we get that $M_{x''} \in \wide\{M_{x'},M_y\} \subseteq \WW$ for $$x'' = (1,x_1',\ldots, x_{k-1}',b,x_{k+1}',\ldots,x_d')$$ contradicting the minimality of $\sum_{i=1}^d x'_i$.

Next set $$y' = (2,3,\ldots, d+2).$$ Then $M_{y'} \in \WW$ and applying Lemma~\ref{2close} to $M_{x'}$ and $M_{y'}$ we find that
\[
M_{(1,2\ldots,d+1)} \in \wide\{M_{x'},M_{y'}\} \subseteq \WW.
\]
Finally by Lemma~\ref{simples},
\[
\MM_{n,d} = \wide\{M_{(1,2,\ldots, d+1)}, \WW_{\{2,3,\ldots,n+d\}}\} \subseteq \WW.
\]

Step 1.2: Assume $a = n+d$, i.e., $S = \{1,\ldots, n+d-1\}$. This is similar to Step 1.1 and the proof is omitted.

Step 1.3: Assume $1 < a < n+d$, i.e., $S = \{1,\ldots,a-1,a+1,\ldots, n+d\}$. Then $x_k = a$ for some $k$. The condition
\[
\Hom_{A_n^{d-1}}(\WW_S, M_x) \neq 0\, \quad \mbox{or} \quad \Hom_{A_n^{d-1}}(M_x, \WW_S) \neq 0,
\]
implies that $x_{k-1} < a-1$ or $x_{k+1} > a+1$. We treat these inequalities in three separate substeps.

Step 1.3.1: Assume both $x_{k-1} < a-1$ and $x_{k+1} > a+1$ hold. In particular $1 \leqslant k \leqslant d-1$. Set
\[
x' = (x_0,\ldots,x_{k-1},a-1,x_{k+1},\ldots,x_d).
\]
Then $M_{x'} \in \WW_S \subseteq \WW$ and applying Lemma~\ref{2close} to $M_x$ and $M_{x'}$ we find that $M_{x''} \in \WW$ for
\[
x'' = (x_1,\ldots, x_{k-1},a-1,a,x_{k+1},\ldots, x_d).
\]
Set
\[
y = (x_1,\ldots, x_{k-1},a-1,a+1,x_{k+1},\ldots, x_d)
\]
and $S' = S \setminus\{1\}$. Then $M_{x''} \in \WW_{S' \cup\{ a\}}$ and $M_y \in \WW_{S'}$. Moreover, $\Hom_{A_n^{d-1}}(M_{x''} ,M_{y})\neq 0$ and so by induction hypothesis
\[
\wide\{M_{x''}, \WW_{S'}\} = \WW_{S' \cup\{ a\}} = \WW_{\{2,\ldots, n+d\}}.
\]
Hence
\[
\WW \supseteq \wide\{M_{x''}, \WW_{S'}, \WW_{S}\} =  \wide\{\WW_{\{2,\ldots, n+d\}}, \WW_{S}\}.
\]
Set
\[
z = 
 \begin{cases}
(1,3,\ldots,a-1,a+1,\ldots,d+3), & \mbox{if } a\leqslant d+2, \\
(1,3,\ldots,d+2), & \mbox{if } a > d+2.
\end{cases}
\]
Then $M_z \in \WW_S$ and $\Hom_{A_n^{d-1}}(M_{z},\WW_{\{2,\ldots, n+d\}})\neq 0$. By Step 1.1
\[
\wide\{\WW_{\{2,\ldots, n+d\}}, \WW_{S}\} = \MM_{n,d}.
\]

Step 1.3.2: Assume $x_{k-1} < a-1$ and $x_{k+1} = a+1$. Since $|S| > d+1$ there is some $b \in S\setminus (S_x \cup \{a-1\})$. Choose $b$ as small as possible. There are two cases to consider: $b < a-1$ and $b > a+1$.

Step 1.3.2.1: Assume $b < a-1$.  First we replace $x$ with something more suitable. For this purpose let $x' \in \V_{n,d}$ be such that $x'_i = x_i$ for all $i \geqslant k$, $x'_{k-1} < a-1$, $M_{x'} \in \WW$ and $\sum_{i = 1}^{k-1} x'_{i}$ as large as possible. Notice that such $x'$ exists since $x$ is a candidate. We claim that $x'_0 > 1$. Otherwise there is $b' < a-1$ such that $x'_l < b' < x'_{l+1}$ for some $0 \leqslant l \leqslant k-1$. Set
\[
y = 
 \begin{cases}
(x'_0,\ldots,x'_l,b',x'_{l+2},\ldots,x'_{k-1}, a-1,x'_{k+1},\ldots, x'_d), & \mbox{if } l< k-1, \\
(x'_0,\ldots,x'_l,b',x'_{k+1},\ldots, x'_d), & \mbox{if } l = k-1.
\end{cases}
\]
Then $M_{y} \in \WW_S \subseteq \WW$ and applying Lemma~\ref{kernel2}(2) to a non-zero morphism $M_y \to M_{x'}$ we find that $M_{x''} \in \WW$ for
\[
x'' = (x'_0,\ldots,x'_{l-1},b',x'_{l+1},\ldots, x'_d)
\] 
contradicting the maximality of $\sum_{i = 1}^{k-1} x'_{i}$.

Now set
\[
y' = (x'_0,\ldots,x'_{k-1}, a-1, x'_{k+1},\ldots,x'_{d}).
\]
Then $\Hom_{A_n^{d-1}}(M_{y'},M_{x'})\neq 0$ and $M_{y'} \in \WW_{S'}$ for $S' = S \setminus\{1\}$. As in step $1.3.1$ we get by induction that
\[
\wide\{M_{x'}, \WW_{S'}\} = \WW_{S' \cup\{ a\}} = \WW_{\{2,\ldots, n+d\}}
\]
and Step 1.1 yields
\[
\WW \supseteq \wide\{M_{x'}, \WW_{S'}, \WW_{S}\} = \MM_{n,d}.
\]

Step 1.3.2.2: Assume $b > a+1$. As before we replace $x$ with something more suitable. Let $x' \in \V_{n,d}$ be such that $x'_i = x_i$ for all $i \leqslant k$, $M_{x'} \in \WW$ and $\sum_{i = k+1}^d x'_{i}$ as large as possible. Note that there is a $b'$ such that $x'_l < b' < x'_{l+1}$ for some $l \geqslant k$ or $x'_d < b'$.

We claim that $l = k$ so that Step 1.3.1 can be applied after replacing $x$ by $x'$. To show this we assume $l > k$ and reach a contradiction. 

Let
\[
x'' = (x'_0,\ldots, x'_{k-1},a-1,x'_{k+1},\ldots, x'_d)
\]
and apply Lemma~\ref{2close} to $M_{x'}$ and $M_{x''}$ to obtain $M_y \in \WW$ for
\[
y = (x'_0,\ldots, x'_{k-1},a-1,a,x'_{k+2},\ldots, x'_d).
\]
Next set
\[
y' = (x'_0,\ldots, x'_{k-1},a-1,x'_{k+1},x'_{k+2},\ldots, x'_{l-1}, b', x'_{l+1},\ldots, x'_d).
\] 
Then $M_{y'} \in \WW_S \subseteq \WW$. Applying Lemma~\ref{kernel2}(2) to a non-zero morphism $M_y \to M_{y'}$ we obtain $M_{y''} \in \WW$ for
\[
y'' = (x'_0,\ldots, x'_{k-1},a,x'_{k+1},x'_{k+2},\ldots, x'_{l-1}, b', x'_{l+1},\ldots, x'_d).
\]
But then $\sum_{i = k+1}^d y''_{i} > \sum_{i = k+1}^d x'_{i}$, which is a contradiction.

Step 1.3.3: Assume $x_{k-1} = a-1$ and $x_{k+1} > a+1$. This is similar to step 1.3.2 and the proof is omitted.

Step 2: Now consider the general case. We proceed by induction on $m = |S_x\setminus S|$. Note that the cases $m = 0$ and $m= 1$ have already been proved. Thus consider the case $m > 1$. We assume $\Hom_{A_n^{d-1}}(M_x, \WW_S) \neq 0$ (the case $\Hom_{A_n^{d-1}}(\WW_S, M_x) \neq 0$ is similar). Then there exists $y$ with $S_y \subseteq S$ such that there is a non-zero morphism $f : M_x \to M_y$. Hence $x \eHom y$. Since $m > 1$ there must be a $1 \leqslant k \leqslant d$ such that $x_{k-1} \not \in S$. In particular $y_{k-1} \neq x_{k-1}$ and so $$x_{k-1} < y_{k-1} < x_{k}.$$ Hence by Lemma~\ref{kernel2}(1), we get that $M_{x'} \in \WW$ for $$x' = (x_0,\ldots,x_{k-1},y_{k-1},x_{k+1},\ldots,x_{d}).$$ Moreover, there is a non-zero morphism $g : M_{x'} \to M_x$. Now $x_{k-1} < y_{k-1} < x_{k}$ also means that $x_{k-1} < x'_{k} < x_{k}$ so by Lemma~\ref{kernel2}(2) we find that $M_{x''} \in \WW$ for $$x'' = (x_0,\ldots,x_{k-2},y_{k-1},x_{k},x_{k+1},\ldots,x_{d}).$$ 
Notice that $x''$ is obtained from $x$ by replacing $x_{k-1}$ (which is not in $S$) by $y_{k-1}$ (which is in $S$). In particular, $|S_{x''}\setminus S| < m$ and setting $S' = S\cup S_{x''}$ we get by induction that $\WW_{S'} \subseteq \wide\{\WW_S, M_{x''}\}$ and so $\WW_{S'} \subseteq \WW$. On the other hand $\Hom_{A_n^{d-1}}(M_x, \WW_{S'}) \neq 0$ and $S_x \setminus S' = \{x_{k-1}\}$ so by Step 1 we get that $\WW_{S'\cup S_x} \subseteq \wide\{\WW_{S'}, M_{x}\} \subseteq \WW$. But $S' \cup S_x = S\cup S_x$ so the claim follows.
\end{proof}

\begin{lemma}\label{maximalDisjoint}
Let $\WW$ be a wide subcategory, $M_x \in \WW$ and $\WW_S \subseteq \WW$ for some admissible set $S$ that is maximal (with respect to inclusion) with this property. If there is $M_y \in \WW_S$ such that there is a walk of morphisms in $\WW$ connecting $M_x$ and $M_y$ then $M_x \in \WW_S$. 
\end{lemma}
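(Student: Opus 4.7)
The plan is to induct on the length of the walk that connects $M_x$ to $M_y$, using Lemma~\ref{grow} at each step to show that every object appearing along the walk already lies in $\WW_S$. The maximality hypothesis on $S$ is what makes each inductive step collapse back into $\WW_S$.

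More precisely, I would write the walk as $M_x = M_{z_0} \leftrightarrow M_{z_1} \leftrightarrow \cdots \leftrightarrow M_{z_l} = M_y$ with each $M_{z_i} \in \WW$ and each arrow representing a non-zero morphism in one direction or the other. Then I would prove by descending induction on $i$ that $M_{z_i} \in \WW_S$. The base case $i = l$ is the hypothesis that $M_y \in \WW_S$.

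For the inductive step, suppose $M_{z_i} \in \WW_S$ with $i \geqslant 1$; I want $M_{z_{i-1}} \in \WW_S$ as well. The non-zero morphism between $M_{z_{i-1}}$ and $M_{z_i}$ shows that either
\[
\Hom_{A_n^{d-1}}(\WW_S, M_{z_{i-1}}) \neq 0 \quad \text{or} \quad \Hom_{A_n^{d-1}}(M_{z_{i-1}}, \WW_S) \neq 0.
\]
Lemma~\ref{grow} then gives $\wide\{\WW_S, M_{z_{i-1}}\} = \WW_{S \cup S_{z_{i-1}}}$. On the other hand $\WW_S \subseteq \WW$ and $M_{z_{i-1}} \in \WW$, so $\wide\{\WW_S, M_{z_{i-1}}\} \subseteq \WW$, which means $\WW_{S \cup S_{z_{i-1}}} \subseteq \WW$. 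Since $S \cup S_{z_{i-1}}$ is admissible and contains $S$, the maximality of $S$ forces $S \cup S_{z_{i-1}} = S$, i.e., $S_{z_{i-1}} \subseteq S$, and hence $M_{z_{i-1}} \in \WW_S$. Taking $i=0$ finishes the argument.

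The main technical point to verify is just that Lemma~\ref{grow} applies at each step, which requires only that the relevant Hom-space is non-zero — and this is exactly what a non-zero morphism in the walk provides. I don't see a genuine obstacle beyond being careful that the direction of the arrow in the walk is irrelevant, since Lemma~\ref{grow} is symmetric in the two Hom vanishing conditions.
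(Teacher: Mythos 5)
Your proposal is correct and is essentially the paper's argument: the paper's own proof is a two-line remark that the single-morphism case follows from maximality plus Lemma~\ref{grow}, and that one then propagates along the walk, which is exactly the induction you spell out. The only point worth being explicit about is that a walk may a priori pass through decomposable objects, but since $\WW$ is closed under summands one may refine it to pass through indecomposables $M_{z_i}$, after which your step-by-step application of Lemma~\ref{grow} and maximality goes through verbatim.
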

\begin{proof}
The claim follows in case the walk consists of a single morphism by maximality and applying Lemma~\ref{grow}. Propagating along an arbitrary walk gives the general result.
\end{proof}

\begin{proposition}\label{surjective}
The map in Theorem~\ref{main} is surjective.
\end{proposition}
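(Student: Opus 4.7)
The plan is, for each wide subcategory $\WW \subseteq \MM_{n,d}$, to define
\[
\Sigma = \{S \subseteq N_{n,d} \mid S \text{ admissible, } \WW_S \subseteq \WW, \text{ and } S \text{ is maximal (by inclusion) with these properties}\},
\]
and then prove both $\WW = \add\{\WW_S \mid S \in \Sigma\}$ and that $\Sigma$ is non-interlacing. The decomposition is straightforward: the inclusion $\add\{\WW_S \mid S \in \Sigma\} \subseteq \WW$ is immediate from the definition, and for the reverse, each indecomposable $M_x \in \WW$ satisfies $\WW_{S_x} = \add\{M_x\} \subseteq \WW$ (and this singleton subcategory is trivially wide), so extending $S_x$ to any maximal admissible $S$ with $\WW_S \subseteq \WW$ produces an $S \in \Sigma$ with $M_x \in \WW_S$.

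The bulk of the work is showing $\Sigma$ is non-interlacing. Suppose for contradiction that distinct $S, S' \in \Sigma$ interlace, via some $M_x \in \WW_S$ and $M_{x'} \in \WW_{S'}$. By Remark~\ref{interlace}, either a nonzero $\Hom$ or a nonzero $\Ext^d$ exists between $M_x$ and $M_{x'}$. In the $\Hom$ case, a single morphism already provides a walk in $\WW$ connecting them. In the $\Ext^d$ case, say $x' \eExt x$ without loss of generality; Proposition~\ref{extensionClosed} places every $M_z$ with $z_i \in \{x_i, x'_i\}$ in $\wide\{M_x,M_{x'}\} \subseteq \WW$, and I would construct a walk from $M_{x'}$ to $M_x$ by flipping the coordinates one at a time from right to left. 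Each intermediate tuple remains in $\V_{n,d}$ and satisfies the $\eHom$ relation with its successor, so by Theorem~\ref{HomAndExt}(3) each step is a nonzero morphism between modules of $\WW$.

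In either case there is a walk in $\WW$ from $M_x \in \WW_S$ to $M_{x'}$. Applying Lemma~\ref{maximalDisjoint} together with the maximality of $S$ forces $M_{x'} \in \WW_S$, i.e.\ $S_{x'} \subseteq S$; the symmetric argument gives $S_x \subseteq S'$. Since $\WW_{S'}$ is connected (being equivalent to $\MM_{|S'|-d,\,d}$ by Proposition~\ref{prop.wide}(2), whose quiver is manifestly connected) and contains $M_x$, every $M_z \in \WW_{S'}$ is connected in $\WW_{S'} \subseteq \WW$ to $M_x \in \WW_S$, and Lemma~\ref{maximalDisjoint} again gives $M_z \in \WW_S$. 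Hence $\WW_{S'} \subseteq \WW_S$ and so $S' \subseteq S$; by symmetry $S = S'$, contradicting $S \neq S'$.

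I expect the main technical obstacle to be the coordinate-flipping argument in the $\Ext^d$ case: verifying that the right-to-left flips from $x'$ to $x$ keep the tuple strictly increasing and yield the $\eHom$ relation at each step. This reduces to exploiting the staircase inequalities $x'_i < x_i \leqslant x'_{i+1}$ implied by $x' \eExt x$, together with the observation that once the coordinates $j > i$ have all been flipped to $x_j$, the new $i$-th entry $x_i$ is still strictly smaller than $x_{i+1}$. The remainder is then just repeated application of the already-established Lemmas~\ref{grow} and~\ref{maximalDisjoint}.
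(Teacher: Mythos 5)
Your proposal is correct and follows essentially the same route as the paper: the same poset of admissible sets with $\WW_S\subseteq\WW$, the same reduction of non-interlacing to the existence of a walk between $\WW_S$ and $\WW_{S'}$, and the same appeal to Lemma~\ref{maximalDisjoint} plus connectedness of the $\WW_S$. The only cosmetic difference is that in the $\Ext^d$ case the paper takes the walk directly from the terms of the $d$-extension of Theorem~\ref{theorem.extensions}, whereas you build it by coordinate flips via Proposition~\ref{extensionClosed} and Theorem~\ref{HomAndExt}(3); both work.
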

\begin{proof}
Let $\WW$ be a wide subcategory. Consider again the poset $\mathcal{P}_\WW$ of all admissible subsets $S$ satisfying $\WW_S \subseteq \WW$ ordered by inclusion. Let $\Sigma$ be the collection of maximal elements in $\mathcal{P}_\WW$. Since every module $M_x \in \WW$ lies in some $\WW_S \subseteq \WW$ (e.g., $S = S_x$) it follows that $\WW = \add\{\WW_S \mid S \in \Sigma\}$. It remains to show that $\Sigma$ is non-interlacing. We do this by showing that if $S,S' \in \Sigma$ interlace, then $\WW_S = \WW_{S'}$ and so $S = S'$. Note that if $S,S' \in \Sigma$ interlace, then by Remark~\ref{interlace}, there are $M_x \in \WW_S$ and $M_{x'} \in \WW_{S'}$ such that there is either a non-zero morphism connecting $M_x$ and $M_{x'}$ or a non-trivial $d$-extension in $\WW$ with endpoints $M_x$ and $M_{x'}$ (in some order) as described in Theorem~\ref{theorem.extensions}. In either case there is a walk in $\WW$ from $M_x$ to $M_{x'}$. Moreover, since $\WW_S$ and $\WW_{S'}$ are connected there is in fact a walk in $\WW$ from any indecomposable in $\WW_S$ to any indecomposable in $\WW_{S'}$. The claim now follows by Lemma~\ref{maximalDisjoint}.
\end{proof}

This completes the proof of Theorem~\ref{main}.

\subsection{Number of wide subcategories}
Finally, we briefly discuss the number of wide subcategories of $\MM_{n,d}$, which we denote by $w_{n,d}$. For $d = 1$ it is well-known that the numbers $w_{n,1}$ are Catalan numbers:
\[
w_{n,1} = \frac{1}{n+2} {{2n+2}\choose{n+1}}.
\]
For $n = 1$ we have $w_{1,d} = 2$ since $\MM_{n,d}$ has precisely one indecomposable in this case.

For $n = 2$ the category $\MM_{n,d}$ has $d+2$ indecomposables
\[
M_1 = M_{1,2,\ldots, d,d+1}, \;
M_2 = M_{1,2,\ldots, d,d+2}, \ldots, \;
M_{d+1} = M_{1,3,\ldots, d+1,d+2}, \;
M_{d+2} = M_{2,3,\ldots, d+1,d+2}.
\]
Hence a wide subcategory of $\MM_{n,d}$ is determined by a subset of $\{M_1,\ldots, M_{d+2}\}$, which is naturally encoded as word of length $d+2$ in letters $0,1$. Such a word corresponds to a wide subcategory if and only if it avoids any occurrence of $11$ when read cyclically or is the word $11\ldots 1$. Counting the number of such words is straightforward. In fact, $(w_{2,d})_d$ appears as A001612 in \cite{OEIS} and satisfies the recurrence relation
\[
w_{2,d} = w_{2,d-1} + w_{2,d-2} -1. 
\]

For general $n$ and $d$, we have not found an easy formula $w_{n,d}$. However, due to the simple combinatorial description of wide subcategories in Theoreom~\ref{main} it is straightforward to write an algorithm that computes the number $w_{n,d}$ for any $n \geqslant 1$, $d \geqslant 1$. For instance one may generate each $\Sigma$ by iteratively attaching admissible sets $S$ that do not interlace. Running such an algorithm on a computer one finds the following values for small $n$ and $d$.
\[
\begin{array}{c}
\mbox{Some values of $w_{n,d}$} \\
\begin{array}{|r|r|r|r|r|r|r|r|r|}
\hline
d	&w_{1,d}	&w_{2,d}	&w_{3,d}	&w_{4,d}	&w_{5,d}	&w_{6,d}	&w_{7,d}	&w_{8,d} \\ \hline
1	&2	&5	&14	&42	&132&	429	&1,430	&4,862 \\ 
2	&2	&8	&47	&374	&4,083	&62,824	&1,376,012	&42,579,642 \\ 
3	&2	&12	&237	&16,830	&4,597,078 &&&\\ 
4	&2	&19	&1,724	&3,499,884 &&&& \\ 
5	&2	&30	&17,934 &&&&&\\ 
6	&2	&48	&273,092 &&&&&\\ 
7	&2	&77	&5,732,137 &&&&&\\ \hline
\end{array}
\end{array}
\]

\end{document}